\documentclass{amsart}

\usepackage{amsmath}
\usepackage{amssymb}
\usepackage{mathrsfs}
\usepackage{tikz}
\usepackage{amsthm}
\usepackage{hyperref}

\usepackage{tikz-qtree}

\newtheorem{thm}{Theorem}[section]

\newtheorem{prop}[thm]{Proposition}

\newtheorem{lem}[thm]{Lemma}

\newtheorem{cor}[thm]{Corollary}

\newtheorem*{claimstar}{Claim}

\theoremstyle{remark}
\newtheorem{rem}[thm]{Remark}

\theoremstyle{definition}
\newtheorem{defn}[thm]{Definition}

\newtheorem{exam}[thm]{Example}

\def\Ind#1#2{#1\setbox0=\hbox{$#1x$}\kern\wd0\hbox to 0pt{\hss$#1\mid$\hss}
	\lower.9\ht0\hbox to 0pt{\hss$#1\smile$\hss}\kern\wd0}

\def\notind#1#2{#1\setbox0=\hbox{$#1x$}\kern\wd0
	\hbox to 0pt{\mathchardef\nn=12854\hss$#1\nn$\kern1.4\wd0\hss}
	\hbox to 0pt{\hss$#1\mid$\hss}\lower.9\ht0 \hbox to 0pt{\hss$#1\smile$\hss}\kern\wd0}

\DeclareMathOperator{\prob}{Prob}
\DeclareMathOperator{\op}{op}
\DeclareMathOperator{\opR}{opR}
\DeclareMathOperator{\test}{test}

\begin{document}

\title{Model theory and combinatorics of banned sequences}
\author{Hunter Chase}
\address{Department of Mathematics, UIC, Chicago IL}
\email{hchase2@uic.edu}

\author{James Freitag}
\address{Department of Mathematics, UIC, Chicago IL}
\email{freitagj@gmail.com}

\thanks{James Freitag was supported by NSF grant no. 1700095}

\begin{abstract} 
We set up a general context in which one can prove Sauer-Shelah type lemmas. We apply our general results to answer a question of Bhaskar \cite{bhaskar2017thicket} and give a slight improvement to a result of Malliaris and Terry \cite{MT2017}. We also prove a new Sauer-Shelah type lemma in the context of $\op$-rank, a notion of Guingona and Hill \cite{guingonahill2015oprank}.
\end{abstract} 

\maketitle
	

\section{Introduction} 
A single combinatorial notion called \emph{VC-dimension} determines important dividing lines in both machine learning (PAC learnability of a class) and model theory (the independence/non-independence dichotomy, IP/NIP) \cite{laskowski1992vapnik}, and the finiteness of this quantity plays an essential role in the development of various structural results in theories without the independence property and in machine learning. Often at the root of these developments is the Sauer-Shalah Lemma, which for a formula $\phi(x;y)$ without the independence property, gives a polynomial bound on the shatter function associated with $\phi$---that is, the number of consistent $\phi$-types over finite sets. Without NIP, however, the number of $\phi$-types can grow exponentially in the size of the finite parameter set. In a recent paper, Bhaskar \cite{bhaskar2017thicket} noticed that when the formula $\phi$ is actually stable, that is, $\phi$ has finite Shelah $2$-rank (also called thicket dimension and Littlestone dimension in the context of set systems), one can relax the way in which the $\phi$-types are constructed, allowing for trees of parameters (explained below) while still proving polynomial bounds on the resulting collection of consistent $\phi$-types. Again, in the absence of stability the number of types formed in this manner can grow exponentially in the height of the tree. Following Bhaskar, we refer to this growth dichotomy theorem as the Thicket Sauer-Shelah Lemma. In \cite{MLMT}, we notice that stability also determines an important dividing line in machine learning; stability determines learnability in various settings of \emph{online learning}. In these settings of learning, various results at their core rely on the polynomial growth of the thicket shatter function.

In both settings described above, the growth of the number of types being polynomially bounded or exponential is completely determined by whether a simple combinatorial notion of dimension is finite, and the upper bound (which is tight in general) on the number of such types (in terms of the appropriate notion of dimension) is identical in both cases. In light of this, Bhaskar naturally asks if there is a single combinatorial principle which explains both the Sauer-Shalah Lemma and the (thicket) stable version. The main purpose of our paper is to set up a general context in which one can prove Sauer-Shelah type results into which both of the above contexts fit, answering Bhaskar's question as well as proving new results. Our solution to the problem is quite general and deals with what we call \emph{banned sequence problems}. 

Our general setup of banned sequence problems is an interesting combinatorial setting in its own right, and we will roughly describe the simplest context here. Suppose that you consider the collection of all binary sequences of length $n$, and for each subset of the indices of size $k$, there is at least one ``banned subsequence" of length $k$. How many binary sequences of length $n$ are there which avoid each of the banned sequences on all subsets of the indices of size $k$? Subject to some very mild assumptions on how the banned sequences are chosen, we show that there are at most 
\[
	\sum _{i=0}^{k-1} \binom{n}{i}
\] 
such sequences. This bound is the bound of the Sauer-Shelah Lemma. Without the mild assumptions, we show that this bound can be violated. The generality of our setup covers both the settings mentioned above as well as yielding some new results.
 
We give a slight improvement of a result of Malliaris and Terry \cite{MT2017} regarding sizes of cliques and independent sets in stable graphs. Essentially, their result uses the finiteness of a certain combinatorial dimension, \emph{tree rank}, in order to establish polynomial bounds strong enough to get a version of the Erdos-Hajnal conjecture, among other results (Malliaris and Terry also develop further structural properties of graphs which we will not touch on in this paper). We examine tree rank in the general context of banned sequence problems, and as a result, give a slight improvement to their bounds. Following this, we give an adaptation of the VC-theorem to the setting of finite thicket dimension set systems. The VC-theorem roughly says that for a set system of finite VC dimension on a probability space, a random sampling by a large enough tuple will, with high probability, give a good estimate of the measures of all of the sets in the set system (i.e. the proportion of elements in the tuple belonging to a set is close to the measure of that set). We show that using the stronger hypothesis of finite thicket dimension allows for a loosening of the sampling assumptions, allowing for sampled elements to depend on the outcomes of the previous samples.

In the last part of the paper, we turn to the setting of $\op$-ranks. For each $s \in \mathbb{N}$, Guingona and Hill \cite{guingonahill2015oprank} define a rank of partial types, $\op_s$-rank. For instance, when $s=1$, $\op_1$-rank is equal to the Shelah 2-rank. Working with set systems of finite $\op_s$-rank, we establish a new variant of the Sauer-Shelah Lemma using our banned sequence setup. 

We note that not every known variant of the Sauer-Shelah Lemma seems to fit into the context of banned sequence problems; the main results of \cite{chernikov2014n} establish a variant of Sauer-Shelah for $n$-dependent theories which does not seem to easily fit into our context of banned sequence problems. Is there a general setup which also covers the known Sauer-Shelah style results for $n$-dependent theories? This seems reasonable to ask because $n$-dependent theories generalize NIP theories in a way similar to how theories with finite $\op_s$-rank generalize stable theories. 

\subsection{Organization} 
In section \ref{prelim}, we give the necessary preliminary notation for our results. In section \ref{banned}, we lay out the basic theory of banned sequence problems along with some applications. In section \ref{VCVCVC}, we adapt the VC-theorem. In section \ref{higherdim}, we generalize our banned sequence problems. In section \ref{opsection}, we apply generalized banned problems to the $\op$-rank setting. 

\subsection{Acknowledgements} 
The authors would like to thank Dave Marker, Vincent Guingona, Caroline Terry, Maryanthe Malliaris, Gabriel Conant, and Cameron Hill for useful conversations and suggestions during the preparation of this article. The authors would also like to especially thank Alex Kruckman and Siddharth Bhaskar for a number of useful comments as well as pointing out some mistakes in early versions of several proofs.

\section{Preliminaries} \label{prelim} 

Our primary combinatorial tool applies to theorems surrounding VC-dimension and thicket dimension (also known as Shelah's 2-rank in model theory or Littlestone dimension in machine learning), and we recall those definitions and relevant theorems. The next several definitions can be found in various sources, e.g. \cite{simon2015guide}. 

Throughout, any indexing starts at 0, and $[n] := \{0, 1, \ldots, n-1 \}$. By $\binom{[n]}{k}$ we mean the collection of all subsets of $[n]$ of size $k$.

Recall that a set system $(X, \mathcal{F})$ (often referred to as $\mathcal{F}$ when $X$ is understood) consists of a set $X$ and a collection $\mathcal{F} \subseteq \mathcal{P}(X)$ of subsets of $X$. For $Y \subseteq X$, the projection of $\mathcal{F}$ onto $Y$ is the set system with base set $Y$ and collection of subsets
\[
	\mathcal{F}_Y := \{ F \cap Y \, | \, F \in \mathcal{F} \}.
\]

VC-dimension measures the ability of a set system to pick out subsets of a set of a given size.

\begin{defn}
	A set system $(X, \mathcal{F})$ \emph{shatters} a set $Y$ if $\mathcal{F}_Y = \mathcal{P}(Y)$. The VC-dimension of $\mathcal{F}$ is the largest $k < \omega$ such that $\mathcal{F}$ shatters some set of size $k$, or is infinite if $\mathcal{F}$ shatters arbitrarily large sets. The \emph{shatter function}
	\[
		\pi_\mathcal{F}(n) := \sup_{Y \subseteq X, |Y| = n} |\mathcal{F}_Y|
	\]
	is given by the supremum of the size of the projection onto subsets of a given size.
\end{defn}

If a set system has finite VC-dimension, then we obtain a polynomial bound on the shatter function.

\begin{thm}[Sauer-Shelah Lemma] \label{VC-SS}
	Let $\mathcal{F}$ be a set system of VC-dimension $k$. Then the maximum size of a projection from $\mathcal{F}$ onto a set $A = \{a_0, \ldots, a_{n-1}\}$ of size $n$ is $\sum_{i=0}^k \binom{n}{i}$. In particular,
	\[
		\pi_\mathcal{F}(n) \leq \sum_{i=0}^k \binom{n}{i}.
	\]
\end{thm}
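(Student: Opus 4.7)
My plan is to prove the bound by double induction on $n$ and on the VC-dimension $k$. The base cases are immediate: if $n=0$ then $|\mathcal{F}_A| \leq 1 = \binom{0}{0}$, and if $k=0$ then $\mathcal{F}$ cannot shatter any singleton, which forces $|\mathcal{F}_A| \leq 1$ as well. For the inductive step I would single out the last element $a_{n-1}$ and write $A' = A \setminus \{a_{n-1}\} = \{a_0, \ldots, a_{n-2}\}$, so that the restriction map $S \mapsto S \cap A'$ from $\mathcal{F}_A$ to $\mathcal{F}_{A'}$ is at most two-to-one.

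The key bookkeeping step is to let
\[
  \mathcal{G} := \{ S \subseteq A' \mid S \in \mathcal{F}_A \text{ and } S \cup \{a_{n-1}\} \in \mathcal{F}_A \}
\]
be the collection of restrictions that have two preimages. Then the partition by fibers of the restriction map gives the clean identity $|\mathcal{F}_A| = |\mathcal{F}_{A'}| + |\mathcal{G}|$. The heart of the argument is the claim that $\mathcal{G}$, viewed as a set system on $A'$, has VC-dimension at most $k-1$: if $\mathcal{G}$ shattered a set $B \subseteq A'$ of size $k$, then by the defining property of $\mathcal{G}$ the original family $\mathcal{F}$ would shatter $B \cup \{a_{n-1}\}$, a set of size $k+1$, contradicting $\mathrm{VC}(\mathcal{F})=k$.

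With this reduction in hand, the inductive hypothesis applies to give $|\mathcal{F}_{A'}| \leq \sum_{i=0}^{k} \binom{n-1}{i}$ (same $k$, smaller $n$) and $|\mathcal{G}| \leq \sum_{i=0}^{k-1} \binom{n-1}{i}$ (both $n$ and $k$ smaller). Adding and applying Pascal's identity $\binom{n-1}{i} + \binom{n-1}{i-1} = \binom{n}{i}$ termwise yields the desired bound $\sum_{i=0}^{k} \binom{n}{i}$. Since this bound holds for the projection onto every $A$ of size $n$, taking the sup gives the claimed inequality for $\pi_\mathcal{F}(n)$.

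The main obstacle, such as it is, is really just packaging the VC-dimension bookkeeping for $\mathcal{G}$ cleanly, since one must be careful that the ``doubled fiber'' condition transfers shattering of $B$ in $\mathcal{G}$ to shattering of $B \cup \{a_{n-1}\}$ in $\mathcal{F}$; everything else is formal. An alternative route would be to prove Pajor's sharpening, that $|\mathcal{F}_A|$ is bounded by the number of subsets of $A$ shattered by $\mathcal{F}$, and then count shattered subsets, but the doubling trick above is the most direct path and foreshadows the general banned-sequence machinery developed later in the paper.
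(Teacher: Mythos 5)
Your proof is correct, and it is the classical ``shifting'' or fiber-counting argument for the Sauer--Shelah lemma: the identity $|\mathcal{F}_A| = |\mathcal{F}_{A'}| + |\mathcal{G}|$ comes from partitioning $\mathcal{F}_A$ by the fibers of the restriction to $A'$, and the VC-dimension drop for $\mathcal{G}$ is established exactly as you argue (shattering of $B$ by $\mathcal{G}$ lifts to shattering of $B \cup \{a_{n-1}\}$ by $\mathcal{F}_A$). The paper, however, deliberately does \emph{not} prove Theorem \ref{VC-SS} this way: its proof is a two-line reduction to Corollary \ref{Ind-SS}. One encodes the $k+1$-element ``forbidden patterns'' as an \emph{independent} $(k+1)$-fold banned binary sequence problem $f$ on $[n]$ (for each $S \in \binom{A}{k+1}$, ban the binary patterns on $S$ not realized by $\mathcal{F}$, which is nonempty because $\mathrm{VC}(\mathcal{F}) = k$), observes that solutions of $f$ are precisely the characteristic vectors of sets in $\mathcal{F}_A$, and invokes the general bound of Theorem \ref{maincombin}. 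The inductive engine in your argument is not lost, though: it reappears, in generalized form, inside the proof of Theorem \ref{maincombin}, whose decomposition into the problems $\hat{f}$ and $f'$ on length $n-1$ is the banned-sequence analogue of your $\mathcal{F}_{A'}$/$\mathcal{G}$ split. What your direct route buys is brevity and self-containedness for this single statement; what the paper's route buys is that the same argument, run once at the level of hereditary BBSPs, simultaneously yields the VC Sauer--Shelah lemma, the thicket Sauer--Shelah lemma (Theorem \ref{thicketSS}), and the applications in later sections, which is the whole point of the paper. One small point of hygiene in your write-up: the inductive hypothesis should really be phrased as ``VC-dimension at most $k$'' rather than ``VC-dimension equal to $k$,'' since $\mathcal{G}$ is only guaranteed to have VC-dimension $\le k-1$, not exactly $k-1$; since $\sum_{i=0}^{k}\binom{n}{i}$ is monotone in $k$, this is harmless, but it is worth stating.
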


Several proofs of the Sauer-Shelah Lemma can be found in various sources, e.g. \cite{simon2015guide, ngo2010SSproofs}. 

Thicket dimension is a variant of VC-dimension; our development follows \cite{bhaskar2017thicket}. Given a set from the set system, elements are presented sequentially, with the element presented depending on membership of previous elements.

\begin{defn}
	A \emph{binary element tree} of height $n$ with labels from $X$ is a function $T: 2^{<n} \rightarrow X$. A \emph{node} is a binary sequence $\sigma \in 2^{<n}$ along with its label, $a_\sigma := T(\sigma)$. A \emph{leaf} is a binary sequence of length $n$, $\tau: [n] \rightarrow \{0,1\}$. A leaf $\tau$ is properly labeled by a set $A$ if for all $m < n$, 
	\[
		a_{\tau|_{[m]}} \in A \quad \text{iff} \quad \tau(m) = 1.
	\]
\end{defn}

\begin{defn}
	The \emph{thicket dimension} of a set system $(X, \mathcal{F})$ is the largest $k < \omega$ such that there is a binary element tree of height $k$ with labels from $X$ such that every leaf can be properly labeled by elements of $\mathcal{F}$, or is infinite if there are such trees of arbitrary height. The \emph{thicket shatter function} $\rho_\mathcal{F}(n)$ is the maximum number of leaves properly labeled by elements of $\mathcal{F}$ in a binary element tree of height $n$.
\end{defn}

\begin{thm}[Thicket Sauer-Shelah Lemma \cite{bhaskar2017thicket}] \label{thicketSS} 
	Let $\mathcal{F}$ be a set system of thicket dimension $k$. Then the maximum number of properly labeled leaves in a binary element tree of height $n$ is $\sum_{i=0}^k \binom{n}{i}$. In particular,
	\[
		\rho_\mathcal{F}(n) \leq \sum_{i=0}^k \binom{n}{i}.
	\]
\end{thm}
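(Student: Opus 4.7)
The plan is to prove Theorem~\ref{thicketSS} by induction on $n$, paralleling one of the standard proofs of the classical Sauer-Shelah Lemma. The two base cases are $n=0$ (a tree of height $0$ has a unique leaf, and $\sum_{i=0}^{k}\binom{0}{i}=1$) and $k=0$, where thicket dimension $0$ means that no tree of height $1$ is shattered, so for every $x \in X$ the elements of $\mathcal{F}$ either all contain $x$ or all avoid it; following any root-to-leaf path in a tree, this forces at most one leaf to be reachable and the bound $1=\binom{n}{0}$ holds.

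For the inductive step, fix a binary element tree $T$ of height $n \geq 1$ with root label $a := T(\emptyset)$, and let $T_0,T_1$ denote the left and right subtrees of height $n-1$. Given $A \in \mathcal{F}$, following $T$ from the root determines a unique properly labeled leaf, which passes through the left child if $a \notin A$ and through the right child if $a \in A$. Setting $\mathcal{F}^- := \{A \in \mathcal{F} : a \notin A\}$ and $\mathcal{F}^+ := \{A \in \mathcal{F} : a \in A\}$, and writing $L_{T_i}(\mathcal{G})$ for the set of leaves of $T_i$ reached by some element of $\mathcal{G}$, the properly labeled leaves of $T$ decompose as
\[
	|L(T)| = |L_{T_0}(\mathcal{F}^-)| + |L_{T_1}(\mathcal{F}^+)|.
\]

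The key observation, and the one step that is not just bookkeeping, is that at least one of $\mathcal{F}^-, \mathcal{F}^+$ must have thicket dimension at most $k-1$. Indeed, if both had thicket dimension $\geq k$, witnessed by trees $S^-$ and $S^+$ of height $k$ all of whose leaves are properly labeled by $\mathcal{F}^-$ and $\mathcal{F}^+$ respectively, then the tree of height $k+1$ obtained by joining $S^-$ and $S^+$ under a new root labeled $a$ would have every leaf properly labeled by $\mathcal{F}$ (the left-half leaves by sets avoiding $a$, the right-half leaves by sets containing $a$), contradicting the assumption that $\mathcal{F}$ has thicket dimension $k$. Identifying this dichotomy is the main conceptual content of the argument; the decomposition and the empty-case bookkeeping (a system with no elements has shatter count $0$) are routine.

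Without loss of generality say $\mathcal{F}^+$ has thicket dimension $\leq k-1$. Applying the inductive hypothesis to the pairs $(T_0, \mathcal{F}^-)$ and $(T_1, \mathcal{F}^+)$ and then summing,
\[
	|L(T)| \leq \sum_{i=0}^{k} \binom{n-1}{i} + \sum_{i=0}^{k-1} \binom{n-1}{i} = \sum_{i=0}^{k} \binom{n}{i},
\]
where the last equality is Pascal's rule applied termwise. This closes the induction and gives the claimed bound on $\rho_{\mathcal{F}}(n)$.
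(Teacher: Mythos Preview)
Your proof is correct, but it takes a different route from the paper. You argue by direct induction on $n$, splitting at the root into $\mathcal{F}^-$ and $\mathcal{F}^+$ and using the key dichotomy that one of the two must drop in thicket dimension; this is essentially Bhaskar's original argument. The paper instead derives Theorem~\ref{thicketSS} from its general combinatorial machinery: given a tree $T$ of height $n$, it builds a $(k+1)$-fold banned binary sequence problem $f$ by, for each $S\in\binom{[n]}{k+1}$ and each $X:[n]\setminus S\to\{0,1\}$, extracting the height-$(k+1)$ subtree $T_{S,X}$ and declaring the improperly-labelable leaves to be banned; it then verifies that $f$ is hereditary and invokes Theorem~\ref{maincombin}. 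Your argument is shorter and more elementary for this specific statement; the paper's detour through BBSPs is the point of the paper, since the same framework simultaneously yields the classical Sauer--Shelah lemma, the type-tree bound of Theorem~\ref{treeheight}, and the $\op$-rank results of Section~\ref{opsection}.
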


VC-dimension and the VC shatter function can be viewed in the context of binary element trees where every node of the same height is labeled with the same element, i.e. $a_\sigma = a_{\sigma'}$ whenever $|\sigma| = |\sigma'|$.

There are dual notions of both VC-dimension and thicket dimension, and their corresponding shatter functions, where the roles of elements and sets are reversed. 

\begin{defn}
	Given a set system $(X, \mathcal{F})$, the dual set system $(X,\mathcal{F})^*$, or just $\mathcal{F}^*$, is the set system with base set $\mathcal{F}$ where the subsets are given by 
	\[
		\{ F \, | \, F \in \mathcal{F}, x \in F \}
	\]
	for each $x \in X$. The dual VC (resp., thicket) dimension of $\mathcal{F}$ is the VC (resp., thicket) dimension of $\mathcal{F}^*$.
\end{defn}

Dual thicket dimension can be calculated by examining \emph{binary decision trees}, where nodes are labeled by sets in the set system, and leaves are labeled by elements. Dual VC-dimension can be calculated similarly.

In model theory, given a model $\mathcal{M}$, the VC (resp., thicket) dimension of a partitioned formula $\phi(x;y)$ is the VC (resp., thicket) dimension of the set system
\[
	(M^{|x|}, \{\phi(M^{|x|}, b) \, | \, b \in M^{|y|} \}).
\]
These combinatorial notions encode model-theoretic dividing lines. A formula is NIP iff it has finite VC-dimension, and is stable iff it has finite thicket dimension. 

\section{The combinatorics of banned sequences} \label{banned} 

The binary element tree structure used to define thicket dimension allows us to identify a leaf of the tree with the binary sequence corresponding to the path through the tree to that leaf. Then counting properly-labeled leaves amounts to counting the corresponding binary sequences. We establish a framework for counting binary sequences under certain conditions reflecting the tree structure, from which we will obtain a unified proof of the Sauer-Shelah lemmas. 

\subsection{Banned binary sequences and Sauer-Shelah lemmas}

Our framework for counting binary sequences will reflect the height of the tree as well as the dimension (either thicket or VC) of the set system. We find it easier to count banned sequences. Having thicket dimension $k-1$ says that in a tree of height $k$, there are some leaves which cannot be properly labeled, and those leaves correspond to sequences that we ban.

\begin{defn}
	A \emph{$k$-fold banned binary sequence problem (BBSP) of length $n$}, $g$, is a function 
	\[
		g: \binom{[n]}{k} \times 2^{n-k} \rightarrow \mathcal{P} (2 ^k) \setminus \{\emptyset \}.
	\]
\end{defn}

Informally, for each $k$-subset of $[n]$ and each binary sequence of length $n-k$, we pick at least one binary sequence of length $k$ to ban. Sometimes we will refer to the sequences selected by the function $f$ as \emph{banned subsequences.} 

\begin{rem} \label{noteymark} 
	It will be convenient to view binary sequences as functions, where the domain is the appropriate set of indices. Given $S \in \binom{[n]}{k}$, when we consider $f(S, X)$, we view $X \in 2^{n-k}$ as a function $X: [n] \setminus S \rightarrow \{0,1\}$, and elements of $f(S, X)$ as functions $Z: S \rightarrow \{0,1\}$.
	
	We shall denote the union of two binary sequences $X$ and $Y$ with disjoint domains as $X \wedge Y$. For example, if $X$ has domain $\{0,2\}$, with $X(0) = X(2) = 0$, and $Y$ has domain $\{1\}$ with $Y(1) = 1$, then $X \wedge Y$ is the binary sequence 010. When we wish to extend a sequence by appending some $j \in \{0,1\}$, we will merely write $X \wedge j$, with the index of $j$ usually understood from the context.
	
	For a fixed $S \in \binom{[n]}{k}$, we denote the elements of $S$ by $\{s_0, \ldots , s_{k-1} \}$, where $s_0 <s_1 < \ldots < s_{k-1} $. 	
\end{rem} 

\begin{defn} 
	A \emph{solution} to a $k$-fold banned binary sequence problem of length $n$, $f$ is a binary sequence, $X \in 2^n$ such that for any $S \subseteq [n]$, 
	\[
		X|_S \notin f(S, X|_{[n] \setminus S} ).
	\]
	A sequence which is not a solution is \emph{banned}.
\end{defn} 

Intuitively, a solution to a banned binary sequence problem is a sequence which avoids every banned subsequence. In applications to binary element trees, properly labeled leaves will correspond to solutions of a certain banned binary sequence problem.

Without further assumptions, the number of solutions of a BBSP can grow exponentially in $n$ for a fixed $k$.

\begin{prop}
	A $k$-fold BBSP of length $n$, $f$, has at most $(2^k - 1)2^{n-k}$ solutions.
\end{prop}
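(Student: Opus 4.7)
The plan is to produce, from the very definition of a BBSP, an explicit injection from $2^{n-k}$ into the set of banned sequences, which immediately yields the complementary upper bound of $(2^k-1)2^{n-k}$ on solutions.

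First I would fix an arbitrary $S \in \binom{[n]}{k}$; any choice works, so one may as well take $S = \{0,1,\dots,k-1\}$. Using the convention from Remark \ref{noteymark}, every binary sequence $W \in 2^n$ factors uniquely as $W = Z \wedge X$ with $Z = W|_S \in 2^S$ and $X = W|_{[n]\setminus S} \in 2^{[n]\setminus S}$. The key observation is that for each $X \in 2^{n-k}$, the set $f(S,X) \subseteq 2^k$ is nonempty by definition of a BBSP, so we may select some $Z_X \in f(S,X)$. The sequence $W_X := Z_X \wedge X$ then satisfies $W_X|_S = Z_X \in f(S, W_X|_{[n]\setminus S})$, so $W_X$ is banned.

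Since the map $X \mapsto W_X$ is injective (its $[n]\setminus S$-restriction recovers $X$), there are at least $2^{n-k}$ banned sequences among the $2^n$ elements of $2^n$. Therefore the number of solutions is at most
\[
2^n - 2^{n-k} = (2^k - 1)\, 2^{n-k},
\]
as claimed.

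There is no real obstacle here: the bound is essentially a tautological consequence of requiring $f(S,X)$ to be nonempty for every $(S,X)$, applied at a single witnessing $S$. The content of the subsequent sections will be to impose further structural hypotheses on $f$ that force the much sharper $\sum_{i=0}^{k-1}\binom{n}{i}$ bound, so the role of this proposition is mainly to show that the crude bound is exponential in $n$ and hence that nontrivial hypotheses are needed.
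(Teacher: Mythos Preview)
Your proof is correct and is essentially the same as the paper's: both fix a single $S\in\binom{[n]}{k}$ and use that $f(S,X)\neq\emptyset$ for every $X$, the paper counting that each $X$ admits at most $2^k-1$ solution-extensions while you count that each $X$ admits at least one banned extension. The two counts are complementary and yield the same bound.
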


\begin{proof}
	Fix $S \in \binom{n}{k}$. For $X: [n] \setminus S \rightarrow \{0,1\}$ and $Z: S \rightarrow \{0,1\}$, $X \wedge Z$ can only be a solution if $Z \notin f(S, X)$, and for each of $2^{n-k}$ many such $X$'s, there are at most $2^k - 1$ many $Z$'s.
\end{proof}

We observe that to obtain this bound, and so have only $2^{n-k}$ banned sequences, we must be able to find a collection $\mathcal{B}$ of $2^{n-k}$ sequences $Y: [n] \rightarrow \{0,1\}$ such that for all $T \in \binom{[n]}{n-k}$ and all $X: T \rightarrow \{0,1\}$, there is some $Y \in \mathcal{B}$ such that $X \subseteq Y$. Then we set $f([n] \setminus T, X) := \{Y|_{[n] \setminus T}\}$. In general this is not possible. It is possible for $k = n$, where we simply pick a sequence of length $n$ to ban, $k = n-1$, where $\mathcal{B}$ can consist of, say, the two constant sequences, and $k = 1$, given below. But this condition already cannot be met for $k = 2$ and $n = 4$. In this case, one can verify that the minimum size of $\mathcal{B}$ to satisfy the above condition is 5, and so a 2-fold BBSP of length 4 can have at most 11 solutions.

\begin{exam}
	Let $f$ be the 1-fold BBSP of length $n$ given by
	\[
		f(\{s\}, X) = \begin{cases}
			1 & X \text{ has an even number of 1s} \\
			0 & X \text{ otherwise.}
		\end{cases}
	\]
	Then $f$ has $2^{n-1}$ solutions, given by those binary sequences which have an even number of 1s.
\end{exam}

We therefore need stronger hypotheses in order to bound the number of solutions by the Sauer-Shelah bound.

\begin{defn} 
	A $k$-fold banned binary sequence problem of length $n$, $f$, is \emph{not hereditary} if there is $S \in \binom{[n]}{k}$ such that for all $Z_\alpha: S \rightarrow \{0,1\}$, there is $X_\alpha: [n] \setminus S \rightarrow \{0,1\}$ such that $Z_\alpha \notin f(S, X_\alpha)$, and additionally, for $Z_\alpha \neq Z_\beta$, the first index at which $X_\alpha \wedge Z_\alpha$ and $X_\beta \wedge Z_\beta$ differ is in $S$.
	
	Otherwise, say $f$ is \emph{hereditary}.
\end{defn}

Hereditary BBSPs reflect enough of the tree structure so as to obtain the Sauer-Shelah bound on the number of sequences as well as frame thicket and VC trees as hereditary BBSPs, and thus derive the corresponding Sauer-Shelah lemmas.

\begin{thm} \label{maincombin} Any hereditary $k$-fold banned binary sequence problem of length $n$ has at most $\sum _{i=0}^{k-1} \binom{n}{i}$ solutions.
\end{thm}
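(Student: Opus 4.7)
The plan is to induct on $n$; the base cases $n \leq k$ are handled by direct counting (for $n < k$ there are no constraints, so all $2^n$ sequences are solutions; for $n = k$ the single constraint bans at least one of the $2^k$ sequences). Assume $n > k$ and split the set of solutions according to the value of the last coordinate: let $\mathcal{U}$ be the set of $Y \in 2^{n-1}$ such that $Y \wedge 0$ or $Y \wedge 1$ is a solution, and let $\mathcal{D}$ be the set of $Y \in 2^{n-1}$ such that both $Y \wedge 0$ and $Y \wedge 1$ are solutions, so that the total number of solutions equals $|\mathcal{U}| + |\mathcal{D}|$. The goal is to bound $|\mathcal{U}| \leq \sum_{i=0}^{k-1} \binom{n-1}{i}$ and $|\mathcal{D}| \leq \sum_{i=0}^{k-2} \binom{n-1}{i}$, so that Pascal's identity yields the target $\sum_{i=0}^{k-1} \binom{n}{i}$.

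For $\mathcal{U}$, introduce a candidate $k$-fold BBSP $g$ on $[n-1]$ by $g(S, X) := f(S, X \wedge 0) \cap f(S, X \wedge 1)$; any $Y \in \mathcal{U}$ solves $g$ because $Y|_S$ avoids at least one of the intersected sets. For $\mathcal{D}$, introduce the $(k-1)$-fold BBSP $h$ on $[n-1]$ by
\[
h(T, X) := \{Z \in 2^T : Z \wedge 0 \in f(T \cup \{n-1\}, X) \text{ or } Z \wedge 1 \in f(T \cup \{n-1\}, X)\},
\]
which records exactly the constraints coming from $k$-subsets containing $n-1$ when both $Y \wedge 0$ and $Y \wedge 1$ are solutions; nonemptiness of $h(T,X)$ is immediate from $f(T \cup \{n-1\}, X) \neq \emptyset$. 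Applying the induction hypothesis to $g$ and $h$ then yields the two sought bounds.

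What remains is to check that $g(S, X) \neq \emptyset$ for every $(S, X)$ and that $g$ and $h$ are hereditary. Each is proved by contrapositive, lifting a would-be bad configuration on $[n-1]$ into a not-hereditary witness for $f$ on $[n]$. For $g$ (and for the nonemptiness of $g(S,X)$), for each $\alpha \in 2^k$ one picks $j_\alpha \in \{0,1\}$ so that $Z_\alpha \notin f(S, X_\alpha \wedge j_\alpha)$ and uses $X_\alpha \wedge j_\alpha$ as the corresponding $[n] \setminus S$-part in $f$. For $h$ one instead doubles up: each of the $2^{k-1}$ bad pairs $(W_\alpha, X_\alpha)$ for $h$ at $T$ spawns two lifted witnesses $(W_\alpha \wedge j, X_\alpha)$, one for each $j \in \{0,1\}$, for $f$ at $S = T \cup \{n-1\}$. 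The first-difference condition transports upward because every index of $S \subseteq [n-1]$ (or of $T \subseteq S$ in the $h$ case) lies strictly below $n-1$, so any disagreement introduced at the appended coordinate $n-1$ is either inside $S$ itself (for $h$) or strictly later than an earlier disagreement in $S$ (for $g$). The main obstacle is the nonemptiness of $g(S,X)$: without the hereditary hypothesis on $f$ there is no reason for $f(S, X \wedge 0)$ and $f(S, X \wedge 1)$ to meet, and the whole argument rests on converting any such empty intersection into a full tree of $2^k$ witnesses certifying that $f$ itself fails to be hereditary at $S$.
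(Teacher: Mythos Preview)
Your proof is correct and follows essentially the same approach as the paper's. Your auxiliary problems $g$ and $h$ coincide exactly with the paper's $f'$ and $\hat f$, respectively, and your hereditary/nonemptiness checks match the paper's verifications line for line; the only cosmetic difference is that you count solutions directly via the $\mathcal U$/$\mathcal D$ split (the standard Pajor-style decomposition) while the paper counts banned sequences and proves the complementary inequality $B(f)\ge B(\hat f)+B(f')$. One small bookkeeping point: your base cases $n\le k$ together with the recursion force you to invoke the statement for a $(k-1)$-fold problem when $k=1$, i.e.\ a $0$-fold BBSP; this is harmless once interpreted correctly (every sequence is banned, giving $|\mathcal D|=0$), but the paper avoids this by taking $k=1$ as an explicit base case, and you may wish to do the same.
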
 

\begin{proof} 
	We prove the result by induction on $n$ and $k$. Throughout, we fix a hereditary $k$-fold banned binary sequence problem of length $n$, $f$. Let $B(f)$ denote the number of sequences banned by $f$. It suffices to prove that
	\[
		B(f) \geq 2^n - \sum_{i=0}^{k-1} \binom{n}{i}.
	\]
	
	The base cases are $k = n$ and $k = 1$. When $k = n$, the result is immediate, since then $2^n - \sum _{i=0}^{k-1} \binom{n}{i} = 1$ and any BBSP has at least one banned sequence.  
	
	When $k=1$, we must show that $B(f) \geq 2^n - 1$, i.e. there is at most one solution. Assume for contradiction that $X \neq Y$ are two distinct solutions to $f$, with $s$ the first index at which $X$ and $Y$ differ, say with $X(s) = 0$ and $Y(s) = 1$. If $s = n-1$, then we have that $f(\{n-1\}, X|_{[n-1]}) = \emptyset$, a contradiction. If $s < n-1$, we observe that $0 \notin f(\{s\}, X_{[n] \setminus \{s\}})$ and $1 \notin f(\{s\}, Y_{[n] \setminus \{s\}})$. Since the first index at which $X$ and $Y$ differ is $s$, this witnesses that $f$ is not hereditary, a contradiction. So $B(f) \geq 2^n - 1$.
	
	Otherwise, we proceed by induction. Given a binary sequence $X$ of length $n$, we call an entry of $X$ \emph{bad} if it is part of a $k$-subset $S$ such that $X|_S \in f(S, X_{[n] \setminus S})$. 
	
	We call a binary sequence $X'$ of length $n-1$ \emph{potentially bad for $T \in \binom{[n-1]}{k-1}$} if there is $j \in \{0,1\}$ such that the $(n-1)$th entry\footnote{We emphasize that we index from 0, so that the $(n-1)$th entry will be the last entry of a sequence of length $n$.} of $X' \wedge j$ is bad for $S = T \cup \{n-1\}$. That is, setting $S = T \cup \{n-1\}$, we have $X'|_T \wedge j \in f(S, X'|_{[n-1] \setminus T})$. In particular, sequences which are potentially bad for $T$ can be extended to sequences which are banned by $f$, witnessed by $S = T \cup \{n-1\}$.
	
	Let $\hat{f}$ be the $(k-1)$-fold banned binary sequence problem of length $n-1$ given by those sequences which are potentially bad for some $T \in \binom{[n-1]}{k-1}$. That is, let $Z \in \hat{f} (T, X)$ if $Z \wedge X$ is potentially bad for $T$. We claim that $\hat{f}$ is hereditary. If $\hat{f}$ were not hereditary, then there would exist $T \in \binom{[n-1]}{k-1}$ such that for each $Z_\alpha: T \rightarrow \{0, 1\}$, there is $X_\alpha: [n-1] \setminus T \rightarrow \{0, 1\}$ such that $Z_\alpha \notin \hat{f}(T, X_\alpha)$, with the first index at which any two $X_\alpha \wedge Z_\alpha$ and $X_\beta \wedge Z_\beta$ differ belonging to $T$. Note that for some $\alpha$ and some $j \in \{0, 1\}$, we have that 
	\[
		Z_\alpha \wedge j \in f(T \cup \{n-1\}, X_\alpha),
	\]
	or else associating $Z_\alpha \wedge j$ with $X_\alpha$ would witness that $f$ itself is not hereditary. In particular, $Z_\alpha \wedge X_\alpha$ is potentially bad for $T$. Then, by definition of $\hat{f}$, $Z_\alpha \in \hat{f}(T, X_\alpha)$, a contradiction. So $\hat{f}$ is also hereditary.
	
	Let $f'$ be the $k$-fold banned binary sequence problem of length $n-1$ given by those sequences which contain a banned subsequence for $f$ in the first $n-1$ entries, for any choice of the last entry. That is, given $S \in \binom{[n-1]}{k}$, let $Z \in f'(S, X)$ iff $Z \in f(S, X \wedge j)$ for all $j \in \{0, 1\}$. We claim that $f'$ is hereditary. Suppose for contradiction that $f'$ is not hereditary. Then there is $S \in \binom{[n-1]}{k}$ such that for all $Z_\alpha: S \rightarrow \{0, 1\}$, there is $X_\alpha: [n-1] \setminus S \rightarrow \{0,1\}$ such that $Z_\alpha \notin f'(S, X_\alpha)$, with the first index at which any two $X_\alpha \wedge Z_\alpha$ and $X_\beta \wedge Z_\beta$ differ belonging to $S$. That is, by definition of $f'$, for each $\alpha$, there is $j_\alpha \in \{0,1\}$ such that $Z_\alpha \notin f(S, X_\alpha \wedge j_\alpha)$. Let $X'_\alpha$ be $X_\alpha \wedge j_\alpha$. Then associating $Z_\alpha$ with $X'_\alpha$ witnesses that $f$ is not hereditary, a contradiction.
	
	Now, we aim to prove
	\[
		B(f) \geq B(\hat{f}) + B(f').
	\]
	For a given sequence $X'$ which is banned by $\hat{f}$, there is at least one extension $X$ which is banned by $f$, and we pick one such an extension. For a given sequence $Y'$ banned by $f'$, at most one extension $Y$ was already obtained by extending a banned sequence $X'$ for $\hat{f}$. So there is at least one extension $Y$ which is banned by $f$ (by definition of $f'$) but was not obtained by extending banned sequences for $\hat{f}$. Therefore these banned sequences constructed from $f'$ and $\hat{f}$ have no common members, and so we have
	\[
		B(f) \geq  B(\hat{f}) + B(f'),
	\]
	as desired. By induction, we have that
	\[
		B(f) \geq \left( 2^{n-1} - \sum _{i=0}^{k-2} \binom{n-1}{i} \right) +  \left( 2^{n-1} - \sum _{i=0}^{k-1} \binom{n-1}{i} \right).
	\]
	Noting that $\binom{n-1}{i} + \binom{n-1}{i-1} = \binom{n}{i}$ whenever $i>0$, and that $\binom{n-1}{0} = \binom{n}{0}$, we see that 
	\[
		B(f) \geq 2^n - \sum _{i=0}^{k-1} \binom{n}{i}.
	\]
	Thus $f$ has at most $\sum_{i=0}^{k-1} \binom{n}{i}$ solutions.
\end{proof}

It shall be useful to identify a stronger banned binary sequence problem, namely those in which $f(S, X)$ depends only on $S$.

\begin{defn}
	A banned binary sequence problem $f$ is $\emph{independent}$ if $f(S, X) = f(S, Y)$ for any $X, Y: [n] \setminus S \rightarrow {0,1}$. When $f$ is independent, we write $f(S)$.
\end{defn}

\begin{cor}\label{Ind-SS}
	Any independent $k$-fold banned binary sequence problem of length $n$ has at most $\sum_{i=0}^{k-1} \binom{n}{i}$ solutions.
\end{cor}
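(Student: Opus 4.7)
The plan is to deduce the corollary directly from Theorem \ref{maincombin} by verifying that every independent BBSP is hereditary. Since Theorem \ref{maincombin} already gives the bound $\sum_{i=0}^{k-1}\binom{n}{i}$ for hereditary BBSPs, all that is needed is this implication.

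So suppose $f$ is an independent $k$-fold BBSP of length $n$; write $f(S) \subseteq 2^S$ for its value at $S \in \binom{[n]}{k}$. The key observation is that the codomain of $f$ excludes the empty set, so $f(S) \neq \emptyset$ for every $S$; pick some $Z^\ast \in f(S)$. To show $f$ is hereditary, suppose toward contradiction that it is not. Unpacking the definition of "not hereditary," there is some $S \in \binom{[n]}{k}$ such that for every $Z_\alpha \colon S \to \{0,1\}$ one can choose $X_\alpha \colon [n]\setminus S \to \{0,1\}$ with $Z_\alpha \notin f(S, X_\alpha)$. In the independent case $f(S, X_\alpha) = f(S)$ is the same set regardless of $X_\alpha$, so applying the hypothesis to $Z_\alpha = Z^\ast$ yields $Z^\ast \notin f(S)$, contradicting the choice of $Z^\ast$. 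Hence $f$ is hereditary and the corollary follows by Theorem \ref{maincombin}.

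There is no real obstacle here: the second condition in the definition of "not hereditary" (about where the sequences first differ) is never even needed, because the first condition already fails vacuously once $f(S)$ is nonempty. In other words, independence collapses the hereditary condition to the trivial requirement that each $f(S)$ be nonempty, which is built into the definition of a BBSP.
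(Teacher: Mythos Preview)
Your proof is correct and matches the paper's own argument essentially verbatim: both show that if $f$ were not hereditary at some $S$, then independence would force $Z_\alpha \notin f(S)$ for every $Z_\alpha$, contradicting $f(S)\neq\emptyset$, and then invoke Theorem~\ref{maincombin}. Your observation that the first-difference condition is never used is also correct and implicit in the paper's proof.
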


\begin{proof}
	We check that $f$ is hereditary. If not, then there is $S \in \binom{[n]}{k}$ such that for all $Z_\alpha : S \rightarrow \{0,1\}$, there is $X_\alpha: [n] \setminus S \rightarrow \{0,1\}$ with $Z_\alpha \notin f(S, X_\alpha) = f(S)$. But then $f(S)$ is empty, a contradiction. The result follows from Theorem \ref{maincombin}.
\end{proof}

Banned binary sequence problems provide a common framework to prove Sauer-Shelah type bounds.

\begin{proof}[Proof of Theorem \ref{VC-SS}]
	We obtain a $k+1$-fold independent BBSP $f$ of length $n$ as follows. Given $S = \{a_{s_0}, \ldots a_{s_{k}}\} \in \binom{A}{k+1}$, let $f(S)$ be the set of binary sequences $Z$ of length $k+1$ such that there is no $F \in \mathcal{F}$ such that $a_{s_i} \in F$ iff $Z(i) = 1$. We have that $f(S)$ is nonempty since the VC-dimension of $\mathcal{F}$ is $k$, and $f$ is clearly independent. Then a subset $B$ of $A$ is in the projection from $\mathcal{F}$ onto $A$ iff the characteristic sequence of $B$ (i.e. the sequence where the $j$th entry is 1 iff $a_j \in B$) is a solution to $f$. The result follows from Corollary \ref{Ind-SS}.
\end{proof}

\begin{proof}[Proof of Theorem \ref{thicketSS}]
	Let $T$ be a binary element tree of height $n$, with nodes $a_\sigma$ for $\sigma \in 2^{<n}$. We obtain a $k+1$-fold hereditary BBSP of length $n$, $f$, as follows. Given $S = \{s_0, \ldots, s_k\} \in \binom{[n]}{k+1}$ where $s_0 < s_1 < \cdots < s_k$ and $X: [n] \setminus S \rightarrow \{0,1\}$, we obtain a binary element tree of height $k+1$ by taking all paths $\tau \in 2^n$ through $T$ such that $X \subseteq \tau$. Any two such paths first differ at some node $a_\sigma$ where $|\sigma| \in S$, so removing all other nodes gives us the binary element tree $T_{S,X}$ of height $k+1$. Since $\mathcal{F}$ has thicket dimension $k$, not all leaves of $T_{S,X}$ can be properly labeled, so let $f(S, X)$ be the set of all sequences whose corresponding leaves in $T_{S,X}$ cannot be properly labeled. Then a leaf in $T$ can only be properly labeled if the corresponding sequence is a solution to $f$.
	
	We now show that $f$ as constructed above is hereditary. Fix $S = \{s_0, \ldots, s_k\}$, and suppose for contradiction that this choice of $S$ witnesses that $f$ is \emph{not} hereditary. Then, for each $Z_\alpha: S \rightarrow \{0,1\}$, there is $X_\alpha: [n] \setminus S \rightarrow \{0,1\}$ such that $Z_\alpha \notin f(S, X_\alpha)$. We obtain a complete binary tree of height $k+1$ specified by each path $X_\alpha \wedge Z_\alpha$ constructed in this manner, restricted to $S$. In particular, any two paths constructed in this manner first differ at some index in $S$, as the first index at which $X_\alpha \wedge Z_\alpha$ and $X_\beta \wedge Z_\beta$ differ is in $S$. Since each $Z_\alpha$ is not banned, we have a complete binary tree of height $k+1$ in which every leaf can be properly labeled, a contradiction.
	
	The result then follows from Theorem \ref{maincombin}.
\end{proof}

\subsection{An application to type trees}

Banned binary sequence problems can be applied to other problems with a tree structure. We use this to improve a result of Malliaris and Terry \cite{MT2017}. 

\begin{defn} 
	Given a graph $G = (V,E)$ on $n$ vertices and $A \subseteq 2^{<n}$, closed under initial segments, we say that a labeling $V= \{ a_\eta \, | \, \eta \in A \}$ is a \emph{type tree} if for each $ \eta \in A:$
	\begin{enumerate} 
		\item If $\eta \wedge 0 \in A$, then $ a_{\eta \wedge 0}$ is nonadjacent to $a_ \eta$. If $\eta \wedge 1 \in A$, then $ a_{\eta \wedge 1}$ is adjacent to $a_ \eta$. 
		\item If $\eta \subsetneq \eta' \subsetneq \eta''$, then $a_\eta $ is adjacent to $a_{\eta'} $ if and only if $a_\eta$ is adjacent to $a_{\eta''}$. 
	\end{enumerate}
	A type tree has height $h$ if $A \subseteq 2^{<h}$ but $A \nsubseteq 2^{<h-1}$.
\end{defn}

This is a specific instance of a type tree. More generally, given a model $\mathcal{M}$, a finite set $B \subseteq M$, a finite collection $\Delta$ of partitioned formulas closed under cycling of the variables, and $A\subseteq \omega^{<\omega}$ closed under initial segments, a type tree is a labeling $B = \{b_\eta \, | \, \eta \in A\}$ such that, for any $\eta, \eta' \in A$, $b_\eta$ and $b_{\eta'}$ have the same $\Delta$-type over their common predecessors $\{b_{\zeta} \, | \, \zeta \subsetneq \eta, \beta \subsetneq \eta' \}$ iff $\eta \subseteq \eta'$ or $\eta' \subseteq \eta$. Type trees are used in more generality in \cite{MS2014stableregularity}.

\begin{defn}
	The \emph{tree rank} of a graph $G = (V,E)$ is the largest integer $t$ such that there is a subset $V' \subset V$ and some indexing $V' =  \{ a_ \eta \, | \, \eta \in 2^{<t} \}$ which is a type tree for the induced graph on $V'$, i.e. the type tree of $V'$ is a full binary tree of height $t$.
\end{defn} 

The main interest in type trees for graphs lies in the fact that if we have a branch of length $h$ for a graph $(V,E)$ with tree rank $t$, there is a clique or independent set of size at least $\max \{\frac{h}{2}, t\}$ \cite[Lemma 4.4]{MT2017}. More generally, branches through a type tree can be used to extract indiscernible sequences \cite[Theorem 3.5]{MS2014stableregularity}. In both cases, stability establishes the length of long branches through the type tree. For graphs, this is by way of tree rank---observe that the edge relation having thicket dimension $k$ implies that the tree rank is at most $k+1$. We use banned binary sequence problems to improve the bounds from \cite[Theorem 4.6]{MT2017}. The improvement is modest, but it demonstrates how banned binary sequence problems accommodate the combinatorics of type trees, at least in the case of the graph edge relation.

\begin{thm}\label{treeheight}
	Let $G = (V,E)$ be a graph with $|V| = n$ and tree rank $t \geq 2$. Suppose $A \subseteq 2^{< n}$ and $V = \{a_\eta \, | \, \eta \in A \}$ is a type tree with height $h$, where $h \geq 2t$. Then
	\[
		h \geq \left( n \cdot (t-2)! \right)^{\frac{1}{t}} + 1.
	\]
\end{thm}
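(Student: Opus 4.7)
Plan. I would bound $n=|A|$ by applying Theorem~\ref{maincombin} at each level of the type tree and summing. Write $A_\ell = A \cap 2^\ell$, so $n = \sum_{\ell=0}^{h-1} |A_\ell|$. For each $\ell$ with $t \leq \ell \leq h-1$, define a $t$-fold banned binary sequence problem $f_\ell$ of length $\ell$ by
\[
f_\ell(S, X) = \{Z \in 2^S : \text{no } \eta \in A_\ell \text{ has } \eta|_S = Z \text{ and } \eta|_{[\ell]\setminus S} = X\}
\]
for $S \in \binom{[\ell]}{t}$ and $X \colon [\ell] \setminus S \to \{0,1\}$.

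The first step is to verify both that $f_\ell(S,X)$ is always nonempty and that $f_\ell$ is hereditary; both use the hypothesis that $G$ has tree rank $t$. If $f_\ell(S,X) = \emptyset$, every $Z \in 2^S$ is realized by some $\tau_Z \in A_\ell$ with $\tau_Z|_{[\ell]\setminus S}=X$; writing $S = \{s_0 < s_1 < \cdots < s_{t-1}\}$, the common prefixes $\tau_Z|_{s_j}$ are well-defined, lie in $A$ by closure under initial segments, and assemble (exactly as in the proof of Theorem~\ref{thicketSS}) into a full type subtree of height $t+1$ in $G$, contradicting tree rank $t$. A non-hereditary witness for $f_\ell$ yields the same forbidden extraction, with the ``first difference among witness branches lies in $S$'' condition replacing the common off-$S$ part and again guaranteeing that the required common prefixes at each $s_j$ are well-defined.

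Theorem~\ref{maincombin} then gives $|A_\ell| \leq \sum_{i=0}^{t-1}\binom{\ell}{i}$; this bound also holds for $\ell < t$, since then the right-hand side collapses to $2^\ell$. Summing and applying the hockey-stick identity,
\[
n \leq \sum_{i=0}^{t-1}\sum_{\ell=0}^{h-1}\binom{\ell}{i} = \sum_{j=1}^{t}\binom{h}{j}.
\]
A routine calculation using $h \geq 2t$ bounds this last sum by $(h-1)^t/(t-2)!$, which rearranges to the desired inequality $h \geq (n(t-2)!)^{1/t} + 1$.

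The main obstacle is the hereditary verification, namely showing carefully that a non-hereditary witness for $f_\ell$ does produce a nested extraction of a type subtree of height $t+1$. The key input is the ``first difference in $S$'' condition, which forces pairwise common prefixes of the witness branches to lie exactly at the split levels in $S$, so that closure of $A$ under initial segments places these common prefixes in $A$ and they form the upper levels of the extracted type subtree.
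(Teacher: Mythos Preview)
Your approach is essentially the paper's: associate a hereditary $t$-fold BBSP to each level of the type tree, apply Theorem~\ref{maincombin} to bound $|A_\ell|$, and sum over levels with the same final estimate. The paper packages this as a single BBSP of length $h-1$ (with $f(S,X)$ depending only on the prefix up to $\max S + 1$) and then reads off the per-level bounds, but the nonemptiness and hereditary verifications via extracting a height-$(t+1)$ type subtree are exactly the ones you outline.
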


The assumptions on $t$ and $h$ are not restrictive if our aim is to obtain cliques or independent sets. If $t = 1$, then there is no branching, and we obtain a clique or independent set of size $\frac{n}{2}$. If $h < 2t$, then the largest clique or independent set guaranteed by \cite[Lemma 4.4]{MT2017} is just the tree rank $t$.

\begin{proof} 
	We will associate a hereditary $t$-fold banned binary sequence problem of length $h-1$ with the type tree. Fix any subset $S = \{s_0, \ldots, s_{t-1}\}$ in $\binom{[h-1]}{t}$ and any $X \in 2^{[h-1] \setminus S}$. Let $f(S, X)$ consist of all $Z \in 2^S$ such that $(X \wedge Z) |_{[s_{t-1} + 1]}$ is an element of $2^{<h}$ which is not in the index set $A$ of the type tree. 
	
	
	Suppose for contradiction that $f(S,X) = \emptyset$. For each $\eta \in 2^{<t+1}$, we identify $\eta$ with a partial function $Z_\eta : S \rightharpoonup \{0,1\}$, where $\eta(i) = Z_\eta(s_i)$. For each $i < t$ and each $\eta: [i] \rightarrow \{0,1\}$ in $2^{<t+1} \setminus 2^t$, let $b_\eta = a_{(X \wedge Z_\eta) |_{[s_i]}}$. For each $\eta: [t] \rightarrow \{0,1\}$ in $2^t$, let $b_\eta = a_{(X \wedge Z_\eta) |_{[s_{t-1} + 1]}}$. Note that each $b_\eta$ is well-defined---in particular, for $\eta \in 2^t$, if $b_\eta = a_{(X \wedge Z_\eta) |_{[s_{t-1} + 1]}}$ was not an element of the type tree, then we would have $Z_\eta \in f(S, X)$. The rest of the elements are well-defined since the index set of a type tree is closed under initial segments. Then the $b_\eta$ define a full binary type tree of height $t+1$, contradicting our assumption that the tree rank of $G$ is $t$. So $f$ is a $t$-fold BBSP of length $h-1$.
	
	We check that $f$ is hereditary. Suppose for contradiction that $f$ is not hereditary, witnessed by some $S \in \binom{[h-1]}{t}$. So for each $Z_\alpha: S \rightarrow \{0,1\}$, there is $X_\alpha: [h-1] \setminus S \rightarrow \{0,1\}$ such that $Z_\alpha \notin f(S,X_\alpha)$, and for $\alpha \neq \beta$, the first index at which $X_\alpha \wedge Z_\alpha$ and $X_\beta \wedge Z_\beta$ differ is in $S$. Identify each $\eta \in 2^{<t+1}$ with $Z_\eta$ as above. For each $i < t$  and each $\eta: [i] \rightarrow \{0,1\}$, let $b_\eta = a_{(X_\alpha \wedge Z_\alpha)|_{[s_i]}}$ for any $\alpha$ such that $Z_\eta \subseteq Z_\alpha$.   For each $\eta: [t] \rightarrow \{0,1\}$, let $b_\eta = a_{(X_\eta \wedge Z_\eta) |_{[s_{t-1} + 1]}}$. These $b_\eta$ are defined since $Z_\eta \in f(S, X_\eta)$ by hypothesis. All other $b_\eta$, for $\eta: [i] \rightarrow \{0,1\}$, $i < t$, are defined since type trees are closed under initial segments, and well-defined since if $Z_\eta \subseteq Z_\alpha, Z_\beta$, then the first index at which $X_\alpha \wedge Z_\alpha$ and $X_\beta \wedge Z_\beta$ differ is in $S$ and is at least $s_i$. Then the $b_\eta$ form a type tree of height $t+1$, a contradiction.
	
	Thus a type tree of height $h$ gives a hereditary $t$-fold banned binary sequence problem of length $h-1$. Now, by Theorem \ref{maincombin}, the number of nodes at level $h_0$, $h_0 = 0, \ldots, h-1$, is at most 
	\[
		\sum _{i=0}^{t-1} \binom{h_0}{i}.
	\]
	Thus, the total number of nodes of a type tree of height $h$ and tree rank $t$ is at most 
	\begin{align}
		\sum_{h_0 = 0}^{h-1} \sum_{i=0}^{t-1} \binom{h_0}{i} & = 1 + \sum_{h_0 = 1}^{h-1} \sum_{i=0}^{t-1} \binom{h_0}{i} \nonumber \\
		& = 1 + \sum_{h_0 = 1}^{h-1} \left( 1 + \sum_{i=1}^{t-1} \binom{h_0}{i} \right ) \nonumber \\
		& \leq \sum_{h_0 = 1}^{h-1} \sum_{i=1}^{t-1} \binom{h-1}{i} \label{treeineq1} \\
		& \leq \sum_{h_0 = 1}^{h-1} \sum_{i=1}^{t-1} \frac{(h-1)^{t-1}}{(t-1)!} \label{treeineq2} \\
		& \leq \sum_{h_0 = 1}^{h-1} \frac{(h-1)^{t-1}}{(t-2)!} \nonumber \\
		& \leq \frac{(h-1)^t}{(t-2)!}, \nonumber
	\end{align}
	where estimates in (\ref{treeineq1}) and (\ref{treeineq2}) follow from hypotheses on $t$ and $h$. Then
	\[
		\frac{(h-1)^t}{(t-2)!} \geq n,
	\]
	so
	\[
		h \geq \left( n \cdot (t-2)! \right)^{\frac{1}{t}} + 1.
	\]
\end{proof} 

Under the hypotheses of Theorem \ref{treeheight}, applying \cite[Lemma 4.4]{MT2017} gives us a clique or independent set of size at least 
\[
	\frac{\left( n \cdot (t-2)! \right)^{\frac{1}{t}} +1}{2}.
\]
This is an improvement of the lower bound given by Malliaris and Terry \cite[Corollary 4.7]{MT2017}. 

\section{Adapting the Fundamental Theorem of Vapnik and Chervonenkis} \label{VCVCVC} 

The fundamental theorem of Vapnik and Chervonenkis, also known as the VC-theorem, states that for a set system of finite VC dimension, a random sampling by a single tuple will, with high probability, uniformly give a good estimate of the measures of all of the sets in the set system. We adapt this to the finite thicket dimension case. The stronger hypothesis of finite thicket dimension allows for more complicated sampling, by allowing for sampled elements to depend on the outcomes of the previous samples.

Let $\mathcal{F}$ be a set system on $X$, with $S \in \mathcal{F}$. Fix $\bar{x} = (x_\sigma)_{\sigma \in 2^{<n}}$, which we call a \emph{test tree}. We define 
\[
	\test(\bar{x}, S) := \frac{1}{n} \sum_{i=0}^{n-1} \sum_{\tau \in 2^i} \left( 1_S( x_\tau ) \cdot \prod_{\sigma < \tau} \chi_S^\tau( x_\sigma) \right)
\]
where $1_S$ is the indicator function for $S$, $\sigma < \tau$ means $\sigma$ is an initial segment of $\tau$, and $\chi_S^\tau$ is the $\tau$-indicator function for $S$, i.e. for $\sigma < \tau$,
\[
	\chi_S^\tau(x_\sigma) = \begin{cases}
		1 & x_\sigma \in S \leftrightarrow \tau(|\sigma|) = 1 \\
		0 & \text{otherwise.}
	\end{cases}
\]
This is calculated by first checking if $x_{S, 0} := x_\emptyset \in S$, then checking if $x_{S, 1} := x_{(1_S(x_{S, 0}))} \in S$, then checking if $x_{S, 2} := x_{(1_S(x_{S_0}), 1_S(x_{S, 1}))} \in S$, and so on. That is, we follow the path determined by $S$ through the test tree and check each element on the path. Indeed, we have that $\prod_{\sigma < \tau} \chi_S^\tau( x_\sigma) = 1$ iff $\tau$ follows the characteristic path of $S$ on $(x_\sigma)$ (i.e. $\tau$ is an initial segment of the unique binary sequence $\eta \in 2^n$ such that, for each $\sigma < \eta$, $x_\sigma \in S$ iff $\eta(|\sigma|) = 1$), and checking $x_\tau$ is only relevant if we have done so. Following this path, we count how many positive answers we get and divide by the number of queries $n$ to estimate the measure of $S$. (We think of $\test(\bar{x}, S)$ as testing $S$ by $\bar{x}$ to get a \textbf{t}hicket \textbf{est}imate of the measure of $S$.)

Write
\begin{equation} \label{treevariable}
	Y_{S, i}(\bar{x}) := \sum_{\tau \in 2^i} \left( 1_S( x_\tau ) \cdot \prod_{\sigma < \tau} \chi_S^\tau( x_\sigma) \right)
\end{equation}
so that $\test(\bar{x}, S) := \frac{1}{n} \sum_{i=0}^{n-1} Y_{S, i}(\bar{x})$. That is $Y_{S,i}(\bar{x}) = 1$ if the $i$th query is 1, following the above process. 

\begin{prop}[Weak law of large numbers]\label{weaklaw}
	Let $S \subseteq X$ be an event in a probability space $(X, \mu)$. Fix $\epsilon > 0$. Then for any integer $n$, we have
	\[
		\mu^n \left( \bar{x} \in X^n \, | \, \left| \frac{1}{n} \sum_{i=1}^n 1_S(x_i) - \mu(S) \right| \geq \epsilon \right) \leq \frac{1}{4n\epsilon^2}
	\]
\end{prop}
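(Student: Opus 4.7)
The plan is to follow the standard textbook route: reduce to a variance estimate and apply Chebyshev's inequality, using only that $1_S$ takes values in $\{0,1\}$ so its variance is at most $1/4$.

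First I would introduce, under the product probability measure $\mu^n$, the random variables $X_i(\bar{x}) := 1_S(x_i)$ for $i = 1, \ldots, n$. These are i.i.d. Bernoulli random variables with mean $p := \mu(S)$ and variance $p(1-p)$. The crucial elementary observation is that $p(1-p) \leq \tfrac{1}{4}$ for all $p \in [0,1]$, since the quadratic $p(1-p)$ is maximized at $p = 1/2$.

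Next I would look at the sample mean $\bar{X} := \frac{1}{n}\sum_{i=1}^n X_i$. By linearity of expectation, $\mathbb{E}[\bar{X}] = p$, and by independence of the $X_i$,
\[
\mathrm{Var}(\bar{X}) \;=\; \frac{1}{n^2}\sum_{i=1}^n \mathrm{Var}(X_i) \;=\; \frac{p(1-p)}{n} \;\leq\; \frac{1}{4n}.
\]

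Finally I would apply Chebyshev's inequality to the random variable $\bar{X} - p$, which has mean $0$ and variance at most $1/(4n)$, to conclude
\[
\mu^n\!\left(\bar{x} \in X^n \,\Big|\, \bigl|\bar{X}(\bar{x}) - p\bigr| \geq \epsilon\right) \;\leq\; \frac{\mathrm{Var}(\bar{X})}{\epsilon^2} \;\leq\; \frac{1}{4n\epsilon^2},
\]
which is exactly the claimed bound. There is no real obstacle here; this is a routine application of Chebyshev, and the only mild subtlety is recording the $1/4$ factor coming from the maximum of $p(1-p)$. The proof does not interact with the thicket machinery developed above (this proposition is a preparatory tool stated in its i.i.d. form for later use in the thicket version of the VC-theorem).
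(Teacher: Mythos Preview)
Your proof is correct and is essentially the standard argument the paper has in mind: the paper does not give its own proof of this proposition but simply cites \cite{simon2015guide}, and the very next proof (of the thicket version, Proposition~\ref{weaklawthicket}) ``follows the proof of \ref{weaklaw}'' using exactly your ingredients---mean $\mu(S)$, variance $\mu(S)(1-\mu(S)) \leq \tfrac14$, and Chebyshev's inequality.
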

See \cite{simon2015guide} for a proof.

\begin{prop}[Weak law of large numbers for thickets]\label{weaklawthicket}
	Let $S \subseteq X$ be an event in a probability space $(X, \mu)$. Fix $\epsilon > 0$. Then for any integer $n$, we have
	\[
		\mu^{2^n-1} \left( (x_\sigma)_{\sigma \in 2^{<n}} \in X^{2^n-1} \, | \, \left| \frac{1}{n} \sum_{i=0}^{n-1} \sum_{\tau \in 2^i} \left( 1_S( x_\tau ) \cdot \prod_{\sigma < \tau} \chi_S^\tau( x_\sigma) \right) - \mu(S) \right| \geq \epsilon \right) \leq \frac{1}{4n\epsilon^2}.
	\]
\end{prop}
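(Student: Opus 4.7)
The plan is to show that on the probability space $(X^{2^n-1}, \mu^{2^n-1})$ the random variables $Y_{S,0},\dots,Y_{S,n-1}$ from (\ref{treevariable}) are i.i.d.\ Bernoulli variables with parameter $\mu(S)$. Once that is established, $\test(\bar{x},S) = \frac{1}{n}\sum_{i=0}^{n-1} Y_{S,i}(\bar{x})$ is precisely the empirical mean of $n$ i.i.d.\ Bernoulli($\mu(S)$) samples, and the tail bound $\frac{1}{4n\epsilon^2}$ follows directly from Proposition \ref{weaklaw}.

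To set up the reduction I would first rewrite $Y_{S,i}$ in a more intuitive form. For each realization $\bar{x}$ there is a unique characteristic path $\eta^\ast \in 2^n$ satisfying $x_{\eta^\ast|_{[j]}} \in S \iff \eta^\ast(j) = 1$ for all $j<n$. Inspecting the product $\prod_{\sigma < \tau}\chi_S^\tau(x_\sigma)$ shows that it equals $1$ precisely when $\tau$ is an initial segment of $\eta^\ast$, so all terms in (\ref{treevariable}) vanish except the one with $\tau = \eta^\ast|_{[i]}$, leaving $Y_{S,i} = 1_S(x_{\eta^\ast|_{[i]}})$. Equivalently, $Y_{S,i} = 1_S(x_{(Y_{S,0},\dots,Y_{S,i-1})})$: at each level the path queries a tree node whose address is determined by the previous answers.

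The main step, and the one requiring care, is to prove by induction on $i$ that $(Y_{S,0},\dots,Y_{S,i})$ is distributed as $i+1$ independent Bernoulli($\mu(S)$) variables. For $\tau \in 2^i$ consider the atom $E_\tau = \{Y_{S,0}=\tau(0),\dots,Y_{S,i-1}=\tau(i-1)\}$ of $\sigma(Y_{S,0},\dots,Y_{S,i-1})$. By the rewrite, $E_\tau$ is measurable with respect to $\sigma(\{x_{\tau|_{[j]}} : j<i\})$, while on $E_\tau$ one has $Y_{S,i} = 1_S(x_\tau)$, and $x_\tau$ is an i.i.d.\ sample independent of that finite collection. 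Hence $\mu(Y_{S,i}=1 \mid E_\tau) = \mu(S)$, closing the induction. The conceptual obstacle is exactly this adaptive-strategy argument: the path $\eta^\ast$ depends on the samples, but at each level it consults a tree node that has never been examined before, so the i.i.d.\ structure of the test tree of size $2^n-1$ absorbs the adaptivity and no spurious correlations are introduced.
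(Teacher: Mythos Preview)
Your proof is correct and follows essentially the same approach as the paper's: both establish that the $Y_{S,i}$ are i.i.d.\ Bernoulli$(\mu(S))$ variables and then invoke Chebyshev (in your case, packaged as Proposition~\ref{weaklaw}). You are simply more explicit about the inductive independence argument, which the paper asserts in a single line.
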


Intuitively, this follows from \ref{weaklaw} if we take
\[
	\frac{1}{n} \sum_{i=0}^{n-1} \sum_{\tau \in 2^i} \left( 1_S( x_\tau ) \cdot \prod_{\sigma < \tau} \chi_S^\tau( x_\sigma) \right) = \frac{1}{n} \sum_{i=0}^{n-1} 1_S(x_{\tau_{i,S}})
\] 
and view the random choices of the $x_\sigma$ as being made ``on the fly,'' and only making the choices which are relevant given our previous choices. We give a more precise proof.

\begin{proof}
	We follow the proof of \ref{weaklaw}. Observe that for each $i$, the random variable $Y_i: X^{2^n - 1} \rightarrow [0, 1]$,
	\[
		Y_i := \sum_{\tau \in 2^i} \left( 1_S( x_\tau ) \cdot \prod_{\sigma < \tau} \chi_S^\tau( x_\sigma) \right)
	\]
	has expectation $\mu(A)$ and variance $\mu(A)(1-\mu(A)) \leq \frac{1}{4}$, noting that $1_S( x_\tau ) \cdot \prod_{\sigma < \tau} \chi_S^\tau( x_\sigma)$ can be nonzero for exactly one $\tau \in 2^i$. Additionally, the variables $Y_0, \ldots Y_{n-1}$ are mutually independent. So $\frac{1}{n} \sum_{i=0}^{n-1} Y_i$ has expectation $\mu(A)$ and variance $\leq \frac{1}{4n}$. The conclusion then follows from Chebyshev's inequality.
\end{proof}

In other words, as the height of the test tree increases, the probability that a random test tree gives a good estimate of the measure of $S$ approaches 1. The adaptation of the VC-theorem states that, with probability approaching 1, a random test tree gives a uniformly good estimate of the measures of all $S \in \mathcal{F}$, provided that $\mathcal{F}$ has finite thicket dimension.

\begin{thm}[VC-theorem for thickets]
	Let $(X, \mu)$ be a finite probability space, $\mathcal{F} \subseteq \mathcal{P}(X)$ be a family of subsets, and $\rho_\mathcal{F}(n)$ be the thicket shatter function of $\mathcal{F}$. Then for any $n$ and $\epsilon > 0$, we have
	\begin{equation} \label{vcthmeq}
		\mu^{2^n-1} \left( \sup_{S \in \mathcal{F}} |\test(\bar{x}, S) - \mu(S) | > \epsilon  \right) \leq 8 \rho_\mathcal{F}(n) \exp\left(- \frac{n \epsilon^2}{32} \right).
	\end{equation}
\end{thm}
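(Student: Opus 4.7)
The plan is to mirror the proof of the classical VC theorem, using the thicket weak law (Proposition~\ref{weaklawthicket}) in place of the ordinary weak law and the Thicket Sauer--Shelah Lemma (Theorem~\ref{thicketSS}) in place of the ordinary Sauer--Shelah Lemma. I expect four main ingredients: symmetrization against a ghost test tree, a Rademacher random-sign step, reduction to a finite supremum via the thicket shatter function, and a Hoeffding-type concentration estimate.

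First, I would introduce an independent ghost test tree $\bar{y} = (y_\sigma)_{\sigma \in 2^{<n}}$ with the same distribution as $\bar{x}$. Proposition~\ref{weaklawthicket} applied to each fixed $S$ guarantees that $\test(\bar{y}, S)$ lies within $\epsilon/2$ of $\mu(S)$ with probability at least $1/2$ once $n$ is sufficiently large. From this one derives the standard symmetrization inequality
\[
  \mu^{2^n-1}\!\left(\sup_S |\test(\bar{x},S) - \mu(S)| > \epsilon\right) \;\leq\; 2\,\mu^{2(2^n-1)}\!\left(\sup_S |\test(\bar{x},S) - \test(\bar{y},S)| > \epsilon/2\right).
\]
Next, I would introduce iid Rademacher signs $\xi_\sigma$ for $\sigma \in 2^{<n}$ and define swapped trees $\tilde{\bar{x}}, \tilde{\bar{y}}$ by exchanging $x_\sigma$ with $y_\sigma$ whenever $\xi_\sigma = -1$. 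By the iid product structure, $(\tilde{\bar{x}}, \tilde{\bar{y}})$ has the same joint distribution as $(\bar{x}, \bar{y})$, so one may freely average over $\xi$ after conditioning on the underlying labels.

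For the third step, I would observe that, conditional on $(\bar{x}, \bar{y})$, the quantity $\test(\tilde{\bar{x}}, S) - \test(\tilde{\bar{y}}, S)$ depends on $S$ only through the path $S$ traces in each tree. By Theorem~\ref{thicketSS} each tree admits at most $\rho_\mathcal{F}(n)$ properly-labelled paths, so the supremum over $\mathcal{F}$ collapses to a supremum over a finite set whose size is controlled by $\rho_\mathcal{F}(n)$ (or a product $\rho_\mathcal{F}(n)^2$ if the two trees must be treated independently; this together with the symmetrization and Hoeffding constants will determine the final coefficient $8$). For each fixed realization of the paths, Hoeffding's inequality applied to the Rademacher sum of $n$ terms bounded in $[-1,1]$ produces the factor $\exp(-n\epsilon^2/32)$.

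The main obstacle will be the Rademacher step. In the iid setting the swap $(x_i, y_i) \mapsto (y_i, x_i)$ cleanly reduces the difference of empirical means to a signed average; in the tree setting, swapping $x_\sigma$ with $y_\sigma$ can alter which branch $S$ descends into at $\sigma$, so the path itself is entangled with the swap. The cleanest fix is to carry out the union bound and the swap in the right order: condition on $(\bar{x}, \bar{y})$, let the Thicket Sauer--Shelah Lemma cut $\mathcal{F}$ down to a finite representative set of path-behaviours in each tree, and only then apply the Rademacher argument to the resulting finite collection of Bernoulli averages. Threading through this ordering of operations, together with tracking the exact constants so as to land on $8\rho_\mathcal{F}(n)\exp(-n\epsilon^2/32)$, is the delicate part of the argument.
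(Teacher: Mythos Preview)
Your overall architecture---ghost tree, Rademacher signs, union bound via the thicket shatter function, Hoeffding/Chernoff---matches the paper's, but the obstacle you flag in the Rademacher step is real and your proposed fix does not close it. You index the signs $\xi_\sigma$ by \emph{nodes} $\sigma\in 2^{<n}$ and swap $x_\sigma\leftrightarrow y_\sigma$; as you note, this perturbs the path that $S$ traces through the swapped trees, so after conditioning on $(\bar x,\bar y)$ the quantity $\test(\tilde{\bar x},S)-\test(\tilde{\bar y},S)$ is a nonlinear function of the $2^n-1$ signs and Hoeffding does not apply. Your suggested ordering (condition, cut $\mathcal F$ down via thicket Sauer--Shelah, then run the Rademacher argument) is circular: the Sauer--Shelah reduction identifies $S$ with its path through the \emph{fixed} trees $\bar x,\bar y$, but the quantity you must then control is an average along the path through $\tilde{\bar x}$, which still depends on $\xi$; conversely, if you swap before conditioning, the trees are random and no finite reduction is yet available.

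The paper's resolution is to index the signs by \emph{levels}: only $n$ signs $\xi_0,\ldots,\xi_{n-1}$. Writing $\test(\bar x,S)-\test(\bar{x'},S)=\frac{1}{n}\sum_{i}\bigl(Y_{S,i}(\bar x)-Y_{S,i}(\bar{x'})\bigr)$ with $Y_{S,i}$ as in~\eqref{treevariable}, each level-$i$ increment is symmetric in distribution, so inserting $\xi_i$ preserves the law; a triangle inequality then separates the two trees (producing a single factor $\rho_{\mathcal F}(n)$ rather than the $\rho_{\mathcal F}(n)^2$ you anticipated) and reduces matters to $\frac{1}{n}\bigl|\sum_i \xi_i Y_{S,i}(\bar x)\bigr|$. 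Conditioning on $\bar x=\bar a$, the values $Y_{S,i}(\bar a)\in\{0,1\}$ are determined by the path of $S$ through $\bar a$ and do \emph{not} depend on $\xi$, so this is an honest $n$-term Rademacher sum; Chernoff gives $\exp(-n\epsilon^2/32)$ and the union bound runs over at most $\rho_{\mathcal F}(n)$ paths. The missing idea is precisely that level-indexed signs decouple the Rademacher randomness from the path, whereas node-indexed signs entangle the two.
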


In particular, when $\mathcal{F}$ has finite thicket dimension, $\rho_\mathcal{F}(n)$ is bounded by a polynomial in $n$, and so the right-hand side of (\ref{vcthmeq}) approaches 0 exponentially quickly.

Our proof follows the proof of the VC-theorem closely---see \cite{simon2015guide}.

\begin{proof}
	Fix an integer $n$. For $\bar{x} = (x_\sigma)_{\sigma \in 2^{<n}}$, $\bar{x'} = (x'_\sigma)_{\sigma \in 2^{<n}}$, and $S \in \mathcal{F}$, let $f(\bar{x}, \bar{x'}, S) := |\test(\bar{x}, S) - \test(\bar{x'}, S)|$.
	
	Let $(x_\sigma)_{\sigma \in 2^{<n}}$, $(x'_\sigma)_{\sigma \in 2^{<n}}$ be mutually independent random elements from $X$, each with distribution $\mu$. Let $\xi_0, \ldots, \xi_{n-1}$ be random variables, independent from each other and from the previous ones, such that $\prob(\xi_i = 1) = \prob(\xi_i = -1) = \frac{1}{2}$, for all $i$.
	
	\begin{claimstar}
		We have
		\[
			\prob \left( \sup_{S \in \mathcal{F}} f(\bar{x}, \bar{x'}, S) > \epsilon/2 \right) \leq 2 \prob \left( \sup_{S \in \mathcal{F}} \frac{1}{n} \left| \sum_{i=0}^{n-1} \xi_i Y_{S,i}(\bar{x}) \right| \geq \epsilon/4  \right)
		\]
		with $Y_{S, i}$ as in (\ref{treevariable}).
	\end{claimstar}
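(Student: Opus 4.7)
The plan is to execute the standard Rademacher symmetrization argument used in proofs of the classical VC theorem, adapted to the thicket-test setting. First I rewrite $f$ as a single signed sum by observing
\[
	f(\bar{x}, \bar{x'}, S) = \frac{1}{n}\left|\sum_{i=0}^{n-1}\bigl(Y_{S,i}(\bar{x}) - Y_{S,i}(\bar{x'})\bigr)\right|.
\]
The goal is to introduce the iid Rademacher signs $\xi_i$ (which are already independent of $\bar{x}, \bar{x'}$), reduce to a Rademacher average involving only $\bar{x}$, and use the exchangeability of $\bar{x}$ and $\bar{x'}$ to pick up the factor of $2$.

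The heart of the argument is a symmetrization step asserting that $\sup_{S \in \mathcal{F}}|\sum_i (Y_{S,i}(\bar{x}) - Y_{S,i}(\bar{x'}))|$ has the same distribution as $\sup_{S \in \mathcal{F}}|\sum_i \xi_i (Y_{S,i}(\bar{x}) - Y_{S,i}(\bar{x'}))|$. To see this, for each realization of $\xi$ I consider the transformation of $(\bar{x}, \bar{x'})$ that swaps $x_\sigma$ with $x'_\sigma$ at every node $\sigma$ with $\xi_{|\sigma|} = -1$. Because the underlying values $\{x_\sigma, x'_\sigma\}_\sigma$ are iid $\mu$, this transformation preserves the joint distribution of $(\bar{x}, \bar{x'})$, and since $\xi$ is independent of the trees, the distributional identity passes to the joint law of $(\bar{x}, \bar{x'}, \xi)$. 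An $S$-wise bookkeeping along the $S$-path in each tree then identifies the transformed sum with $\sum_i \xi_i (Y_{S,i}(\bar{x}) - Y_{S,i}(\bar{x'}))$.

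Once this distributional equality is in hand the remainder is routine. By the triangle inequality,
\[
	\left|\sum_i \xi_i (Y_{S,i}(\bar{x}) - Y_{S,i}(\bar{x'}))\right| \leq \left|\sum_i \xi_i Y_{S,i}(\bar{x})\right| + \left|\sum_i \xi_i Y_{S,i}(\bar{x'})\right|,
\]
so if the left side exceeds $n\epsilon/2$ then at least one term on the right exceeds $n\epsilon/4$. Taking the supremum over $S$, applying a union bound, and using that $\bar{x}$ and $\bar{x'}$ are identically distributed gives the factor of $2$.

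The main obstacle is the symmetrization step itself. In the classical VC setting, swapping the pair $(x_i, x'_i)$ simultaneously negates $1_S(x_i) - 1_S(x'_i)$ for every $S$, but in the thicket setting swapping the level-$i$ values of the two trees can alter the $S$-paths at deeper levels (since deeper paths depend on shallower values), so the naive per-level swap does not obviously negate the differences uniformly in $S$. The way around this is to exploit that for each fixed $S$ the sequence of samples along the $S$-path in $\bar{x}$ (and in $\bar{x'}$) is iid $\mu$ because the path visits distinct nodes, so the classical symmetrization applies $S$-wise; combined with the joint exchangeability of $\bar{x}$ and $\bar{x'}$, this yields the distributional identity needed for the argument to proceed uniformly in $\mathcal{F}$.
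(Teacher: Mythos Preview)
Your overall architecture---rewrite $f(\bar{x},\bar{x'},S)$ as $\tfrac{1}{n}\bigl|\sum_i(Y_{S,i}(\bar{x})-Y_{S,i}(\bar{x'}))\bigr|$, insert Rademacher signs by symmetrization, then split by the triangle inequality and use that $\bar{x},\bar{x'}$ are identically distributed---is exactly the route the paper takes. The paper's justification for the symmetrization step is terse: it simply observes that each difference $Y_{S,i}(\bar{x})-Y_{S,i}(\bar{x'})$ has a symmetric law and concludes that multiplying by $\xi_i$ leaves the distribution unchanged, then proceeds to the same triangle-inequality/union-bound finish you describe.

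The gap is in your ``way around'' the obstacle you yourself flag. Your level-wise swap $x_\sigma\leftrightarrow x'_\sigma$ for $\xi_{|\sigma|}=-1$ is measure-preserving, but as you note it does \emph{not} realize $Y_{S,i}(T\bar{x})-Y_{S,i}(T\bar{x'})=\xi_i\bigl(Y_{S,i}(\bar{x})-Y_{S,i}(\bar{x'})\bigr)$ pointwise, because the $S$-path through the swapped tree is neither the $S$-path through $\bar{x}$ nor through $\bar{x'}$. Your proposed fix---``classical symmetrization applies $S$-wise; combined with joint exchangeability this yields the distributional identity uniformly in $\mathcal{F}$''---is not an argument. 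For each fixed $S$ the $S$-path values are i.i.d.\ and one can symmetrize, but the coupling that accomplishes the sign flip then depends on $S$ (it swaps the $i$th $S$-path value in $\bar{x}$ with the $i$th $S$-path value in $\bar{x'}$, which live at $S$-dependent nodes). You have not produced a single measure-preserving transformation of $(\bar{x},\bar{x'})$ that simultaneously negates the level-$i$ differences for \emph{all} $S$, which is what taking the supremum over $\mathcal{F}$ requires. ``Joint exchangeability of $\bar{x}$ and $\bar{x'}$'' only lets you swap the two trees wholesale, corresponding to $\xi_i\equiv -1$; it does not supply the per-level sign flips. So as written the symmetrization step is asserted rather than proved. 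The paper's one-line justification is no more detailed than yours at this point, but you have explicitly raised the obstruction and then not overcome it.
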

	
	Observe that for fixed $i$ and $S$, the random variable $Y_{S, i}(\bar{x}) - Y_{S,i}(\bar{x}')$ has mean 0 and a symmetric distribution (taking values 1 and -1 with equal probability). Therefore its distribution does not change if we multiply it by $\xi_i$. We then compute:
	\begin{align*}
		& \prob \left( \sup_{S \in \mathcal{F}} f(\bar{x}, \bar{x'}, S) > \epsilon/2 \right) \\
		&= \prob \left( \sup_{S \in \mathcal{F}} \frac{1}{n} \left| \sum_{i=0}^{n-1} (Y_{S,i}(\bar{x}) - Y_{S,i}(\bar{x'})) \right| > \epsilon/2 \right) \\
		&= \prob \left( \sup_{S \in \mathcal{F}} \frac{1}{n} \left| \sum_{i=0}^{n-1} \xi_i (Y_{S,i}(\bar{x}) - Y_{S,i}(\bar{x'})) \right| > \epsilon/2 \right) \\
		&\leq \prob \left( \sup_{S \in \mathcal{F}} \frac{1}{n} \left| \sum_{i=0}^{n-1} \xi_i Y_{S, i}(\bar{x}) \right| > \epsilon/4 \text{ or } \sup_{S \in \mathcal{F}} \frac{1}{n} \left| \sum_{i=0}^{n-1} \xi_i Y_{S, i}(\bar{x'}) \right| > \epsilon/4 \right) \\
		&\leq 2 \prob \left( \sup_{S \in \mathcal{F}} \frac{1}{n} \left| \sum_{i=0}^{n-1} \xi_i Y_{S, i}(\bar{x}) \right| > \epsilon/4 \right).
	\end{align*}
	This proves the claim.
	
	\begin{claimstar}
		We have:
		\[
		\prob \left( \sup_{S \in \mathcal{F}} f(\bar{x}, \bar{x'}, S) > \epsilon/2 \right) \leq 4 \rho_\mathcal{F}(n) \exp \left(- \frac{n\epsilon^2}{32}\right).
		\]
	\end{claimstar}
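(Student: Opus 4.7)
The plan is to bound the right-hand side of the first claim by a standard symmetrization-plus-union-bound argument, with the thicket shatter function playing the role that the VC shatter function plays in the proof of the classical VC-theorem. By the first claim it suffices to show
\[
\prob \left( \sup_{S \in \mathcal{F}} \frac{1}{n} \left| \sum_{i=0}^{n-1} \xi_i Y_{S, i}(\bar{x}) \right| > \epsilon/4 \right) \leq 2\, \rho_\mathcal{F}(n) \exp\!\left(-\frac{n\epsilon^2}{32}\right),
\]
after which multiplying by $2$ yields the desired factor of $4$.

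I would first condition on $\bar{x}$. The crucial observation is that, viewing $\bar{x}=(x_\sigma)_{\sigma\in 2^{<n}}$ as a binary element tree of height $n$ with labels from $X$, the path $\eta_S\in 2^n$ defined recursively by $\eta_S(j)=1_S(x_{\eta_S|_{[j]}})$ is exactly the leaf that $S$ properly labels, and one checks from the definition of $Y_{S,i}$ that $Y_{S,i}(\bar{x})=\eta_S(i)$. Consequently the map $S\mapsto (Y_{S,i}(\bar{x}))_{i<n}$ factors through the set of leaves of $\bar{x}$ that are properly labeled by some element of $\mathcal{F}$; by definition of the thicket shatter function there are at most $\rho_\mathcal{F}(n)$ such leaves. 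This is the only place in the argument where the hypothesis on $\mathcal{F}$ is used, and I expect it to be the main conceptual step.

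Having reduced the supremum over $\mathcal{F}$ to a supremum over at most $\rho_\mathcal{F}(n)$ fixed sequences, a union bound reduces the task (after integrating over $\bar{x}$) to showing that for every fixed $\eta\in 2^n$,
\[
\prob\!\left( \frac{1}{n}\left|\sum_{i=0}^{n-1} \xi_i\,\eta(i)\right| > \epsilon/4 \right) \leq 2\exp\!\left(-\frac{n\epsilon^2}{32}\right).
\]
The variables $\xi_i\eta(i)$ are independent, mean zero, and supported in $[-1,1]$, so Hoeffding's inequality applied with $t=n\epsilon/4$ delivers exactly this bound with the right constant. Packaging the pieces — first claim, conditioning on $\bar{x}$, the thicket shatter-function union bound, and Hoeffding — produces the second claim. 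The rest of the VC-theorem for thickets will then follow from this claim combined with Proposition \ref{weaklawthicket} in the standard way, mirroring \cite{simon2015guide}.
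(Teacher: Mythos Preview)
Your argument is correct and is essentially the same as the paper's: condition on $\bar{x}$, observe that the vector $(Y_{S,i}(\bar{x}))_{i<n}$ depends only on the characteristic path of $S$ through $\bar{x}$ (at most $\rho_\mathcal{F}(n)$ possibilities), apply a Chernoff/Hoeffding bound to each fixed path, take a union bound, and combine with the first claim. The only cosmetic difference is that the paper cites Chernoff's bound where you cite Hoeffding's inequality, which here give the same constant.
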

	
	Fix a tuple $\bar{a} = (a_\sigma)_{\sigma \in 2^{<n}} \in X^{2^n - 1}$. For $S \in \mathcal{F}$, let $A_S(\bar{a})$ be the event $\frac{1}{2} |\sum_{i=0}^{n-1} \xi_i Y_{S,i}(\bar{a})| > \epsilon/4$, so that the only randomness is in the $\xi_i$'s. By Chernoff's bound (see Appendix B of \cite{simon2015guide}), we have
	\[
		\prob(A_S(\bar{a})) \leq 2 \exp \left( - \frac{n \epsilon^2}{32}\right).
	\]
	Note that $A_S(\bar{a})$ depends only on the characteristic path of $S$ through $\bar{a}$, of which there are at most $\rho_\mathcal{F}(n)$ values as $S$ ranges in $\mathcal{F}$. So there are at most $\rho_\mathcal{F}(n)$ events $A_S(\bar{a})$ to consider. Then the union bound shows that the disjunction $\cup_{S \in \mathcal{F}} A_S(\bar{a})$ has probability at most $2 \rho_\mathcal{F}(n) \exp(-n\epsilon^2/32)$. Then, by the first claim, we have
	\begin{align*}
		\prob \left( \sup_{S \in \mathcal{F}} f(\bar{x}, \bar{x'}, S) > \epsilon/2 \right) &\leq 2 \prob \left( \cup_{S \in \mathcal{F}} A_S(\bar{x}) \right) \\
		& \leq 4 \rho_\mathcal{F}(n) \exp \left( - \frac{n \epsilon^2}{32}\right).
	\end{align*}
	This proves the claim.
	
	We may assume that $n > 2/\epsilon^2$, as otherwise the right-hand side of (\ref{vcthmeq}) is larger than 1. Let
	\[
		X_0 := \{\bar{b} \in X^{2^n - 1} \, | \, \prob \left( \sup_{S \in \mathcal{F}} f(\bar{x}, \bar{b}, S) > \frac{\epsilon}{2} \right) \geq \frac{1}{2} \}.
	\]
	Then the second claim implies that $\mu^{2^n - 1}(X_0) \leq 8 \rho_\mathcal{F}(n) \exp(-n\epsilon^2/32)$. Fix $\bar{a} \in X^{2^n - 1} \setminus X_0$ and $S \in \mathcal{F}$. By Proposition \ref{weaklawthicket}, we have
	\[
		\prob \left(|\test(\bar{x}, S) - \mu(S)| > \frac{\epsilon}{2} \right) \leq \frac{1}{n\epsilon^2} < \frac{1}{2}.
	\]
	It follows that there is $\bar{x} \in X^{2^n - 1}$ such that $f(\bar{x}, \bar{a}, S) \leq \epsilon/2$ and $|\test(\bar{x}, S) - \mu(S)| \leq \epsilon/2$. This implies that $|\test(\bar{a}, S) - \mu(S)| \leq \epsilon$. As $S$ was arbitrary, we conclude that for any $\bar{a} \in X^{2^n - 1} \setminus X_0$, we have that $\sup_{S \in \mathcal{F}} |\test(\bar{a}, S) - \mu(S)| \leq \epsilon$. The result follows.
\end{proof}

The proof goes through verbatim if $X$ is infinite, provided we have some measurability conditions, namely:
\begin{itemize}
	\item each $S \in \mathcal{F}$ is measurable;
	\item for each $n$, the function 
	\[
		\bar{x} = (x_\sigma)_{\sigma \in 2^{<n}} \mapsto \sup_{S \in \mathcal{F}} |\test(\bar{x}, S) - \mu(S)|
	\]
	from $X^{2^n-1}$ to $\mathbb{R}$ is measurable; and
	\item for each $n$, the function
	\[
		(\bar{x}, \bar{x'}) = ((x_\sigma)_{\sigma \in 2^{<n}}, (x'_\sigma)_{\sigma \in 2^{<n}}) \mapsto \sup_{S \in \mathcal{F}} |\test(\bar{x}, S) - \test(\bar{x'}, S)|
	\]
	from $X^{2(2^n - 1)}$ to $\mathbb{R}$ is measurable.
\end{itemize}

To demonstrate the power of the result, we would like to find an example of a set system $\mathcal{F}$ over a probability space $(X, \mu)$ with finite VC-dimension but infinite thicket dimension such that
\[
	\mu^{2^n-1} \left( \sup_{S \in \mathcal{F}} |\test(\bar{x}, S) - \mu(S) | > \epsilon  \right)
\]
does not approach 0 exponentially quickly in $n$ (as it would if $\mathcal{F}$ had finite thicket dimension). This would show that sampling by test trees is meaningfully more complex than sampling by tuples, and that we can necessarily obtain uniformly good estimates of measures with probability exponentially approaching 1 only for set systems that have finite thicket dimension.

\section{Generalized banned sequence problems and applications}  \label{higherdim} 
In this section we generalize Theorem \ref{maincombin} to the setting of $j$-ary sequences, and apply the resulting combinatorics to prove Sauer-Shelah type lemmas in the $\op$-rank context \cite{guingonahill2015oprank}. 

\subsection{Banned $j$-ary sequence problems}

\begin{defn} 
	A \emph{$k$-fold banned $j$-ary sequence problem of length $n$} is a function 
	\[
		f: \binom{[n]}{k} \times j^{n-k} \rightarrow \mathcal{P} ( j ^k) \setminus \{\emptyset \}.
	\] 
	A \emph{solution} to $g$ is a $j$-ary sequence $X \in j^n$ such that for any $S \in \binom{[n]}{k}$, $$X|_S \notin f(S, X|_{[n] \setminus S} ).$$ 
\end{defn} 

As before, for a fixed $S \in \binom{[n]}{k}, $, we denote the elements of $S$ by $\{s_0, \ldots , s_{k-1} \}$, where $s_0 < s_1 < \ldots < s_{k-1}$. When we consider $f(S,X)$, we view $X \in j^{n-k}$ as a function $X:[n] \setminus S \rightarrow [j] = \{0, 1, \ldots, j-1\}$, and elements of $f(S,X)$ as functions $Z: S \rightarrow [j]$.

\begin{defn} 
	A $k$-fold banned $j$-ary sequence problem ($j$-ary BSP) of length $n$, $f$, is \emph{not hereditary} if there is $S \in \binom{[n]}{k}$ such that for all $Z_\alpha: S \rightarrow [j]$, there is $X_\alpha : [n] \setminus S \rightarrow [j]$ such that $Z_\alpha \notin f(S, X_\alpha)$, and additionally, for $Z_\alpha \neq Z_\beta$, the first index at which $X_\alpha \wedge Z_\alpha$ and $X_\beta \wedge Z_\beta$ differ is in $S$.
	
	Otherwise, say $f$ is \emph{hereditary}.
\end{defn}

\begin{thm} \label{gencombin} 
	Any hereditary $k$-fold banned $j$-ary sequence problem of length $n$ has at most $\sum_{i=0}^{k-1} (j-1)^{n-i}  \binom{n}{i}$ solutions.
\end{thm}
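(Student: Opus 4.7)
The plan is to mirror the proof of Theorem~\ref{maincombin} in the $j$-ary setting, inducting on $n$ and $k$ and proving the equivalent lower bound $B(f) \geq j^n - \sum_{i=0}^{k-1}(j-1)^{n-i}\binom{n}{i}$ on the number of sequences banned by $f$. The binomial identity $j^n = ((j-1)+1)^n = \sum_{i=0}^n (j-1)^{n-i}\binom{n}{i}$ plays the role of $2^n = \sum_i \binom{n}{i}$ from the binary case, and Pascal's rule $\binom{n}{i} = \binom{n-1}{i} + \binom{n-1}{i-1}$ will carry the induction once the recursive inequality below is in hand. The base case $k=n$ is immediate, since the identity above reduces the target to $B(f) \geq 1$. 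The base case $k=1$, unlike its binary counterpart, requires genuine work: unpacking heredity at $S = \{s\}$ together with the first-difference constraint shows that for every prefix $P : [s] \to [j]$ there is a forbidden value $z_P \in [j]$ with $z_P \in f(\{s\},X)$ for every completion $X$ of $P$ to $[n] \setminus \{s\}$; building a solution coordinate by coordinate then leaves at most $j-1$ choices per position, for a total of at most $(j-1)^n$ solutions.

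For the inductive step I would define, in analogy with the binary proof,
\begin{align*}
\hat f(T,X) &:= \{\, Z \in [j]^{k-1} : \exists j' \in [j],\ Z \wedge j' \in f(T \cup \{n-1\}, X) \,\}, \\
f'(S,X) &:= \{\, Z \in [j]^k : \forall j' \in [j],\ Z \in f(S, X \wedge j') \,\},
\end{align*}
for $T \in \binom{[n-1]}{k-1}$ and $S \in \binom{[n-1]}{k}$. Nonemptiness of $\hat f(T,X)$ follows from nonemptiness of $f$. Nonemptiness of $f'(S,X)$ follows from heredity of $f$: an empty $f'(S,X)$ would, for each $Z \in [j]^k$, furnish some $j'_Z \in [j]$ with $Z \notin f(S, X \wedge j'_Z)$, and the family $\{X \wedge j'_Z\}_Z$ would witness failure of heredity of $f$ at $S$, since $X$ is common to all these witnesses and every index of $S \subseteq [n-1]$ is strictly less than $n-1$, forcing pairwise first differences into $S$. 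The same lift-the-witness argument used in Theorem~\ref{maincombin} then shows both $\hat f$ and $f'$ are themselves hereditary; for $\hat f$ the only new wrinkle is that when promoting a hypothetical witness at $T$, one bundles the $j$ choices of coordinate $n-1$ together with the common $T$-prefix into witnesses for $f$ at $S = T \cup \{n-1\}$, whose pairwise first differences occur either at position $n-1$ or at an earlier index in $T$, in both cases inside $S$.

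The key combinatorial inequality is $B(f) \geq B(\hat f) + (j-1)\,B(f')$. Each $Y'$ banned by $f'$ has all $j$ extensions $Y' \wedge j'$ banned by $f$, contributing $j\,B(f')$ distinct banned sequences; each $X'$ banned by $\hat f$ yields at least one further banned extension, and these extras can be chosen distinct across different $X'$. Writing $A,B$ for the banned sets of $\hat f, f'$ inside $[j]^{n-1}$ and counting contributions from $A \setminus B$ separately,
\[
B(f) \;\geq\; j\,|B| + |A \setminus B| \;=\; j\,B(f') + B(\hat f) - |A \cap B| \;\geq\; B(\hat f) + (j-1)\,B(f'),
\]
using $|A \cap B| \leq |B| = B(f')$. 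Applying the inductive hypothesis to $\hat f$ with parameters $(k-1, n-1)$ and to $f'$ with parameters $(k, n-1)$ and reassembling via Pascal's rule yields exactly the target bound. The main obstacle, beyond adapting the $k=1$ base case, is calibrating the $(j-1)$ factor in the extension count correctly; it is this factor that forces the $(j-1)^{n-i}$ weighting in the final bound and is what distinguishes the $j$-ary count from the binary one.
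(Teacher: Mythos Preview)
Your proposal is correct and follows essentially the same approach as the paper's proof: the same induction on $n$ and $k$, the same auxiliary problems $\hat f$ and $f'$, the same heredity checks, and the same recursive inequality $B(f) \geq B(\hat f) + (j-1)B(f')$. Your treatment of the $k=1$ base case (directly extracting a forbidden value $z_P$ for each prefix from heredity) and your counting argument via $j|B| + |A \setminus B|$ are slightly cleaner than the paper's pigeonhole-by-contradiction and extension-picking arguments, and you explicitly verify nonemptiness of $f'(S,X)$, which the paper leaves implicit; but these are presentational refinements of the same proof.
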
 

\begin{proof} 
	The proof is by induction on $n$ and $k$, and is similar to the proof of Theorem \ref{maincombin}. Throughout, we fix a hereditary $k$-fold banned $j$-ary sequence problem of length $n$, $f$.
	
	Let $B(f)$ denote the number of sequences banned by $f$. It suffices to prove that
	\[
		B(f) \geq j^n - \sum_{i=0}^{k-1} (j-1)^{n-i}  \binom{n}{i}.
	\]
	
	The base cases are $k = n$ and $k = 1$. When $k = n$, the result is immediate, since then $j^n - \sum_{i=0}^{k-1} (j-1)^{n-i}  \binom{n}{i}$ = 1, and any $j$-ary BSP has at least one banned sequence.
	
	When $k = 1$, we must show that $B(f) \geq j^n - (j-1)^n$, i.e. there are at most $(j-1)^n$ solutions. Assume for contradiction that there are at least $(j-1)^n + 1$ solutions. We claim that there are $s \in [n]$ and solutions $X_0, \ldots, X_{j-1}$ such that $X_i(s) = i$ for all $i \in [j]$, and
	\[
		X_0|_{[s]} = X_1|_{[s]} = \cdots = X_{j-1}|_{[s]}.
	\]
	Such an $s$ and $X_i$ witness that $f$ is not hereditary, a contradiction. If we cannot find such $X_i$ for $s = 0$, i.e. there is some $m_0 \in [j]$ such that for no solution $X$ does $X(0) = m_0$, then by the generalized pigeonhole principle there is some $l_0 \in [j]$ such that for at least $(j-1)^{n-1} + 1$ solutions $X$, we have $X|_{[1]} = l_0$. From among these sequences, if we cannot find such $X_i$ for $s = 1$, i.e. there is some $m_1 \in [j]$ such that for no solution $X$ does $X(1) = m_1$, then by the generalized pigeonhole principle there is some $l_1 \in [j]$ such that for at least $(j-1)^{n-2} + 1$ solutions $X$, we have $X|_{[2]} = l_0 l_1$. Continue in this fashion. If at any index $s$ we find the desired solutions $X_i$, we are done. If this process does not terminate earlier, we eventually obtain at least $(j-1) + 1 = j$ solutions $X$ with $X|_{[n-1]} = l_0 l_1 \cdots l_{n-2}$, giving us the desired solutions $X_i = l_0 l_1 \cdots l_{n-2} i$ for each $i \in [j]$, with $s = n-1$. This proves the $k = 1$ case.
	
	Otherwise, we proceed by induction. Given a $j$-ary sequence $X$ of length $n$, we call an entry of $X$ \emph{bad} if it is part of a $k$-subset $S$ such that $X|_S \in f(S, X|_{[n] \setminus S})$. 
	
	
	We call a $j$-ary sequence of length $n-1$, $X'$ \emph{potentially bad for $T \in \binom{[n-1]}{k-1}$} if there is $l \in [j]$ such that the $(n-1)$th entry of $X \wedge l$ is bad for $S = T \cup \{n-1\}$. That is, setting $S = T \cup \{n-1\}$, we have $X'|_T \wedge l \in f(S, X'|_{[n] \setminus T})$. 
	
	Let $\hat{f}$ be the $(k-1)$-fold banned $j$-ary sequence problem of length $n-1$ given by those sequences in $j^{n-1}$ which are potentially bad for some $T \in \binom{[n-1]}{k-1}$. That is, let $Z \in \hat{f}(T, X)$ if $Z \wedge X$ is potentially bad for $T$. We claim that $\hat{f}$ is hereditary. If $\hat{f}$ were not hereditary, then there would exist $T \in \binom{[n-1]}{k-1}$ such that for each $Z_\alpha: T \rightarrow [j]$, there is $X_\alpha: [n-1] \setminus T \rightarrow [j]$ such that $Z_\alpha \notin \hat{f}(T, X_\alpha)$, with the first index at which any two $X_\alpha \wedge Z_\alpha$ and $X_\beta \wedge Z_\beta$ differ belonging to $T$. Note that for some $\alpha$ and some $l \in [j]$, we have that
	\[
		Z_\alpha \wedge l \in f(T \cup \{n-1\}, X_\alpha),
	\]
	or else associating $Z_\alpha \wedge l$ with $X_\alpha$ would witness that $f$ itself is not hereditary. In particular, $Z_\alpha \wedge X_\alpha$ is potentially bad for $T$. Then, by definition of $\hat{f}$, $Z_\alpha \in \hat{f}(T, X_\alpha)$, a contradiction. So $\hat{f}$ is also hereditary.
	
	Let $f'$ be the $k$-fold banned $j$-ary sequence problem of length $n-1$ given by those sequences which contain a banned subsequence for $f$ in the first $n-1$ entries, for any choice of the last entry. That is, given $S \in \binom{[n-1]}{k}$, let $Z \in f'(S, X)$ iff $Z \in f(S, X \wedge l)$ for all $l \in [j]$. We claim that $f'$ is hereditary. Suppose for contradiction that $f'$ is not hereditary. Then there is $S \in \binom{[n-1]}{k}$ such that for all $Z_\alpha: S \rightarrow [j]$, there is $X_\alpha: [n-1] \setminus S \rightarrow [j]$ such that $Z_\alpha \notin f'(S, X_\alpha)$, with the first index at which any two $X_\alpha \wedge Z_\alpha$ and $X_\beta \wedge Z_\beta$ differ belonging to $S$. That is, by definition of $f'$, for each $\alpha$, there is $l_\alpha \in [j]$ such that $Z_\alpha \notin f(S, X_\alpha \wedge l_\alpha)$. Let $X'_\alpha$ be $X_\alpha \wedge l_\alpha$. Then associating $Z_\alpha$ with $X'_\alpha$ witnesses that $f$ is not hereditary, a contradiction.
	
	Now, we aim to prove
	\[
		B(f) \geq B(\hat{f}) + B(f') \cdot (j-1).
	\]
	For a given sequence $X'$ which is banned by $\hat{f}$, there is at least one extension $X$ which is banned by $f$, and we pick one such extension. For a given sequence $Y'$ banned by $f'$, at most one extension $Y$ was already obtained by extending a banned sequence $X'$ for $\hat{f}$. So there are at least $j-1$ extensions $Y$ which are banned by $f$ (by definition of $f'$) but was not obtained by extending banned sequences for $\hat{f}$. Therefore these banned sequences constructed from $f'$ and $\hat{f}$ have no common members, and so we have 
	\[
		B(\hat{f}) + B(f') \cdot (j-1),
	\] 
	as desired. By induction, we have that
	\begin{align*}
		B(f)  & \geq j^{n-1} - \sum _{i=0}^{k-2} (j-1)^{n-1-i}  \binom{n-1}{i} \\
		& \quad + \left( j-1 \right) \left(j^{n-1} - \sum_{i=0}^{k-1} (j-1)^{n-1-i}  \binom{n-1}{i} \right) \\ 
		& = j^n - \sum_{i=0}^{k-1} (j-1)^{n-i}  \binom{n}{i} .
	\end{align*} 
	Thus, $f$ has at most $\sum_{i=0}^{k-1} (j-1)^{n-i}  \binom{n}{i}$ solutions.
	
\end{proof}

\subsection{On the op-rank shatter function} \label{opsection}

The context of banned $j$-ary sequences allows us to work in the $\op$-rank context of \cite{guingonahill2015oprank}, which we reframe in terms of set systems. Whereas VC dimension and thicket dimension make use of binary trees, $\op_s$-rank makes use of $2^s$-ary trees.

\begin{defn}
	A \emph{$2^s$-ary element tree} $T$ of height $n$ with labels from $X$ is a labeling of each node $\nu \in (2^s)^{<n}$ by $s$-tuples $x_\nu = (x_{\nu,0}, \ldots, x_{\nu, s-1})$ from $X$. A \emph{leaf} of $T$ is an element of $(2^s)^n$. A leaf $\xi$ is properly labeled by a set $A$ if, for all $j < n$ and for all $i < s$, $x_{\xi|_{[j]}, i} \in A $ iff $\xi(j)(i) = 1$.
\end{defn}

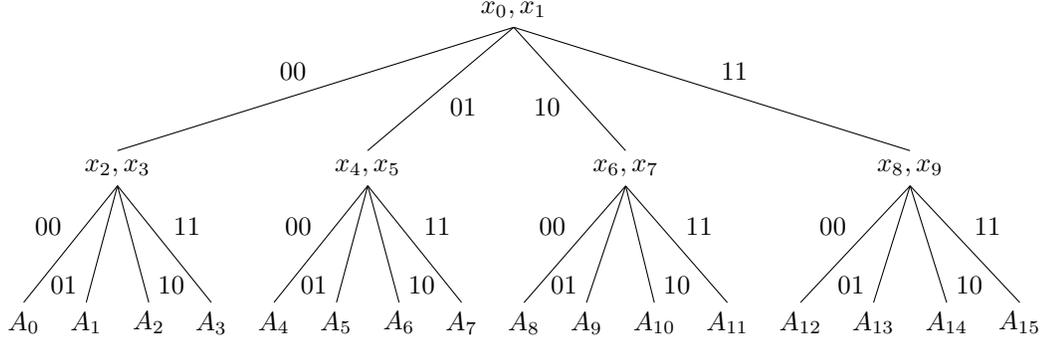
\begin{figure}[h] 
	\centering
	\begin{tikzpicture}
	\tikzset{level distance=60pt}
	\tikzset{sibling distance=5pt}
		\Tree [.{$x_0, x_1$} \edge node[auto=right]{00}; [.{$x_2, x_3$} \edge 	node[auto=right]{00}; $A_0$ \edge node[auto=right, pos=1]{01}; $A_1$ \edge node[auto=left, pos=1]{10}; $A_2$ \edge node[auto=left]{11}; $A_3$ ]
			\edge node[auto=left]{01}; [.{$x_4, x_5$}  \edge node[auto=right]{00}; $A_4$ \edge node[auto=right, pos=1]{01}; $A_5$ \edge node[auto=left, pos=1]{10}; $A_6$ \edge node[auto=left]{11}; $A_7$ ]
			\edge node[auto=right]{10}; [.{$x_6, x_7$} \edge node[auto=right]{00}; $A_8$ \edge node[auto=right, pos=1]{01}; $A_9$ \edge node[auto=left, pos=1]{10}; $A_{10}$ \edge node[auto=left]{11}; $A_{11}$ ]
			\edge node[auto=left]{11}; [.{$x_8, x_9$} \edge node[auto=right]{00}; $A_{12}$ \edge node[auto=right, pos=1]{01}; $A_{13}$ \edge node[auto=left, pos=1]{10}; $A_{14}$ \edge node[auto=left]{11}; $A_{15}$ ] ]
	\end{tikzpicture}
	\caption{A $2^2$-ary element tree of height 2. $A_9$ properly labels its leaf if it contains $x_0$ and $x_7$, but does not contain $x_1$ and $x_6$, with no requirements on membership of the other elements.}
	\label{opfig1}
\end{figure}

While this will be the definition that we use in practice, it is often useful think of such trees as binary trees with certain requirements on uniformity of labels within levels.

\begin{defn}
	An \emph{alternative $2^s$-ary element tree} $T$ of height $n$ with labels from $X$ is a labeling of $2^{<ns}$ by elements of $X$ such that given any two nodes $\sigma$ and $\sigma'$ with labels $x_{\sigma}$ and $x_{\sigma'}$,  if $|\sigma| = |\sigma'| = l$ and $\sigma|_{s[ \lfloor \frac{l}{s} \rfloor ]} = \sigma'|_{s[ \lfloor \frac{l}{s} \rfloor ]}$, then $x_\sigma = x_{\sigma'}$. A \emph{leaf} of $T$ is an element of $2^{ns}$, i.e. a binary sequence of length $ns$. A leaf $\tau$ is properly labeled by a set $A$ if, for all $j < ns$, $x_{\tau|_{[j]}} \in A$ iff $\tau(j) = 1$.
\end{defn}

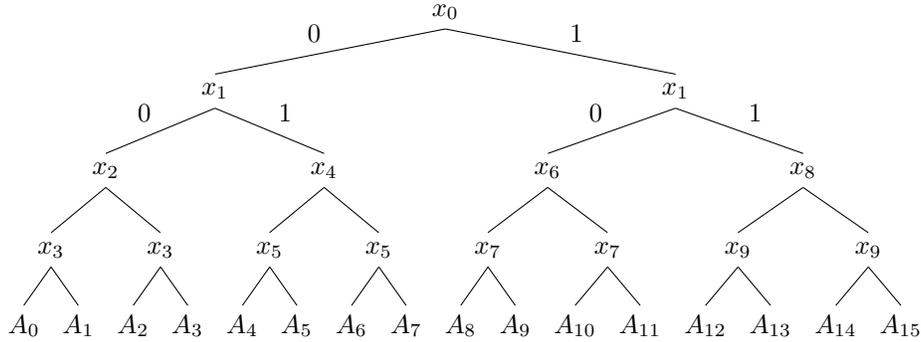
\begin{figure}[h] 
	\centering
	\begin{tikzpicture}
		\Tree [.$x_0$ \edge node[auto=right]{0}; [.$x_1$ \edge node[auto=right]{0}; [.$x_2$ [.$x_3$ $A_0$ $A_1$ ]
					[.$x_3$ $A_2$ $A_3$ ] ]
		  		\edge node[auto=left]{1}; [.$x_4$ [.$x_5$ $A_4$ $A_5$ ]
					[.$x_5$ $A_6$ $A_7$ ] ] ]
			\edge node[auto=left]{1}; [.$x_1$ \edge node[auto=right]{0}; [.$x_6$ [.$x_7$ $A_8$ $A_9$ ]
					[.$x_7$ $A_{10}$ $A_{11}$ ] ]
				\edge node[auto=left]{1}; [.$x_8$ [.$x_9$ $A_{12}$ $A_{13}$ ]
					[.$x_9$ $A_{14}$ $A_{15}$ ] ] ] ]
	\end{tikzpicture}
	\caption{An alternative $2^2$-ary element tree of height 2. Observe that the labels on the first two levels are uniform. Then, on the fourth level (and, trivially, the third level), labels are uniform across all nodes with the same initial segment of length 2. We identify 1 with the right branch. As before, $A_9$ properly labels its leaf if it contains $x_0$ and $x_7$, but does not contain $x_1$ and $x_6$, with no requirements on membership of the other elements.}
	\label{opfig2}
\end{figure}

\begin{defn}
	The \emph{$\op_s$-rank} of a set system $(X, \mathcal{F})$, written $\opR_s(X, \mathcal{F})$ or $\opR_s(\mathcal{F})$, is the largest $k < \omega$ such that there is a $2^s$-ary element tree of height $k$ with labels from $X$ such that every leaf can be properly labeled by elements of $\mathcal{F}$, or is infinite if there are such trees of arbitrary height. As a convention, we set $\opR_s(\mathcal{F}) = -\infty$ if $\mathcal{F} = \emptyset$. The \emph{$\op_s$ shatter function} $\psi_\mathcal{F}^s(n)$ is the maximum number of leaves properly labeled by elements of $\mathcal{F}$ in a $2^s$-ary element tree of height $n$.
\end{defn}

It is easy to verify that the $\op_s$-rank and $\op_s$ shatter function do not depend on which definition of $2^s$-ary element tree we take.

The $\op_s$ context is therefore intermediate between thicket context and VC context. Instead of picking labels node by node (as in the thicket context) or uniformly for a single level (as in the VC context), we pick labels $s$ at a time. As before, the dual $\op_s$-rank and dual $\op_s$ shatter function of a set system are the $\op_s$-rank and $\op_s$ shatter function of the dual set system.

\begin{cor}\label{basicopshatter}
	Let $\mathcal{F}$ be a set system with $\opR_s(\mathcal{F}) = k$. Then 
	\[
		\psi^s_\mathcal{F}(n) \leq \sum _{i=0}^k (2^s-1)^{n-i}  \binom{n}{i}.
	\] 
\end{cor}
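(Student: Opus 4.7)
The plan is to derive Corollary \ref{basicopshatter} from Theorem \ref{gencombin} by encoding a $2^s$-ary element tree as a banned $2^s$-ary sequence problem, in direct analogy with the proof of the Thicket Sauer--Shelah Lemma. Let $T$ be any $2^s$-ary element tree of height $n$ with labels from $X$. Its leaves correspond to sequences $\xi \in (2^s)^n$, and the aim is to bound the number of such $\xi$ that are properly labeled by some element of $\mathcal{F}$.

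First I would construct a $(k+1)$-fold banned $2^s$-ary sequence problem $f$ of length $n$ by localizing to each ``frame'' $S$. Given $S = \{s_0 < \cdots < s_k\} \in \binom{[n]}{k+1}$ and $X : [n] \setminus S \to [2^s]$, the $2^{s(k+1)}$ leaves of $T$ that extend $X$ differ pairwise only at indices in $S$, so the relevant ancestor labels assemble into a $2^s$-ary element tree $T_{S,X}$ of height $k+1$. Because $\opR_s(\mathcal{F}) = k$, not every leaf of $T_{S,X}$ admits a proper labeling; define $f(S, X) \subseteq (2^s)^S$ to be the (nonempty) set of $Z$ whose corresponding leaf in $T_{S,X}$ fails to be properly labeled. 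Then a leaf of $T$ can be properly labeled only if its index sequence is a solution of $f$, so it suffices to count solutions.

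The main step, which I expect to be the most delicate, is verifying that $f$ is hereditary. Suppose for contradiction that some $S = \{s_0 < \cdots < s_k\}$ witnesses non-heredity, with witnesses $X_\alpha \wedge Z_\alpha$ for each $Z_\alpha \in (2^s)^S$ that pairwise first disagree inside $S$. The ``first disagreement lies in $S$'' clause is precisely what guarantees that whenever two witnesses $X_\alpha \wedge Z_\alpha$ and $X_\beta \wedge Z_\beta$ agree on $\{s_0,\dots,s_{i-1}\}$, they also agree on the entire initial segment $[s_i]$; hence the label drawn from $T$ at level $s_i$ depends only on the position in the sub-tree and not on the choice of witness. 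Thus the $2^{s(k+1)}$ witnesses assemble unambiguously into a genuine $2^s$-ary element tree of height $k+1$ every leaf of which is properly labeled, contradicting $\opR_s(\mathcal{F}) = k$. Bookkeeping the correspondence between the $2^s$-ary tree levels and the banned-sequence indices $s_0<\cdots<s_k$ will require care, but the logic is forced by the hereditary definition.

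Finally, applying Theorem \ref{gencombin} with $j = 2^s$ and $k$ replaced by $k+1$ bounds the number of solutions of the hereditary problem $f$, and hence the number of properly labeled leaves of $T$, by $\sum_{i=0}^{k} (2^s - 1)^{n-i} \binom{n}{i}$. Taking the supremum over all height-$n$ element trees $T$ yields the stated bound on $\psi^s_\mathcal{F}(n)$.
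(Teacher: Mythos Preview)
Your proposal is correct and follows essentially the same approach as the paper: encode the $2^s$-ary element tree as a $(k+1)$-fold banned $2^s$-ary sequence problem via the sub-trees $T_{S,X}$, verify heredity by assembling the witnesses $X_\alpha \wedge Z_\alpha$ into a full $2^s$-ary tree of height $k+1$, and invoke Theorem~\ref{gencombin}. Your discussion of why the ``first disagreement lies in $S$'' clause makes the assembled tree well-defined is in fact more explicit than the paper's own treatment of that point.
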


The proof follows our proof of Theorem \ref{thicketSS}, using $j$-ary banned sequence problems.

\begin{proof}
	Let $T$ be an $2^s$-ary element tree of height $n$. Identifying the $2^s$ binary sequences of length $s$ with $[2^s]$, we obtain a hereditary $(k+1)$-fold banned $2^s$-ary sequence problem of length $n$, $f$, as follows. Given $S = \{s_0, \ldots, s_k\} \in \binom{[n]}{k+1}$, where $s_0 < s_1 < \cdots < s_k$ and $X \in (2^s)^{[n] \setminus S}$, we obtain a $2^s$-ary element tree of height $k+1$ by taking all paths $\xi \in (2^s)^n$ through $T$ such that $X \subset \tau$. Decisions will only be made at nodes $\nu$, where $|\nu| \in S$, so removing all other nodes gives us a $2^s$-ary element tree $T_{S, X}$ of height $k+1$. Since $\opR_s(\mathcal{F}) = k$, not all leaves of $T_{S,X}$ can be properly labeled, so let $f(S, X)$ be the set of all sequences whose corresponding leaves in $T_{S, X}$ cannot be properly labeled. Then a leaf in $T$ can only be properly labeled if the corresponding sequence is a solution to $f$.
	
	It remains to show that $f$ is hereditary. Fix $S = \{s_0, \ldots, s_k\}$, and suppose for contradiction that this choice of $S$ witnesses that $f$ is \emph{not} hereditary. Then, for any $Z_\alpha: S \rightarrow \{0, \ldots, 2^s - 1\}$, there is $X_\alpha: [n] \setminus S \rightarrow \{0, \ldots, 2^s - 1\}$ such that $Z_\alpha \notin f(S, X_\alpha)$. We obtain a complete $2^s$-ary tree of height $k+1$ specified by each path $X_\alpha \wedge Z_\alpha$ constructed in this manner, restricted to $S$. Since each $Z_\alpha$ is not banned, we have a $2^s$-ary tree of height $k+1$ in which every leaf can be properly labeled, a contradiction.
	
	The result then follows from Theorem \ref{gencombin}.
\end{proof}

The bound of Corollary \ref{basicopshatter} can be improved by using more information---in particular, when bounding the $\op_s$ shatter function, we can consider $\op_r$-ranks for $r \leq s$. We can already give a better bound for the case where a set system has $\op_r$-rank 0 for some $r$.

\begin{prop} \label{oprank0}
	Let $\mathcal{F}$ be a set system with $\opR_r(\mathcal{F}) = 0$. Then 
	\[
		\psi^s_\mathcal{F}(n) \leq \left( \sum_{i=0}^{r-1} \binom{s}{i} \right)^n
	\]
\end{prop}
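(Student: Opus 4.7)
The first move is to rewrite the hypothesis in terms of VC-dimension. Unpacking the definition of $\op_r$-rank: $\opR_r(\mathcal{F}) = 0$ says there is no $2^r$-ary element tree of height $1$ all of whose leaves can be properly labeled, i.e., no $r$-tuple $(x_0, \ldots, x_{r-1})$ from $X$ such that every function $\xi \colon [r] \to \{0,1\}$ is realized by membership in some $A \in \mathcal{F}$. Since an $r$-tuple with repeated entries cannot realize every such pattern, this is equivalent to saying no set of size $r$ is shattered by $\mathcal{F}$, i.e., $\mathcal{F}$ has VC-dimension at most $r-1$. By the classical Sauer-Shelah Lemma (Theorem \ref{VC-SS}), every $Y \subseteq X$ of size at most $s$ satisfies
\[
	|\mathcal{F}_Y| \leq \sum_{i=0}^{r-1} \binom{|Y|}{i} \leq \sum_{i=0}^{r-1} \binom{s}{i}.
\]

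The second move is a level-by-level counting argument on an arbitrary $2^s$-ary element tree $T$ of height $n$. Call a prefix $\xi|_{[j]}$ of a leaf $\xi \in (2^s)^n$ \emph{live} if there exists $A \in \mathcal{F}$ that realizes the membership pattern dictated by $\xi|_{[j]}$ at every node on the path from the root to $\xi|_{[j]}$. Write $\mathcal{F}_{\xi|_{[j]}}$ for the subfamily of $\mathcal{F}$ consisting of such $A$'s. Given a live prefix reaching node $\nu$ with label $(x_{\nu,0}, \ldots, x_{\nu,s-1})$, the number of live one-step extensions $\xi|_{[j+1]}$ equals the number of membership patterns $[s] \to \{0,1\}$ on $(x_{\nu,0},\ldots,x_{\nu,s-1})$ that are realized by some element of $\mathcal{F}_{\xi|_{[j]}}$, which is bounded by $\bigl|\bigl(\mathcal{F}_{\xi|_{[j]}}\bigr)_{\{x_{\nu,0},\ldots,x_{\nu,s-1}\}}\bigr|$. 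Since VC-dimension is monotone under containment, $\mathcal{F}_{\xi|_{[j]}}$ also has VC-dimension at most $r-1$, so Sauer-Shelah bounds this count by $\sum_{i=0}^{r-1}\binom{s}{i}$.

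Finally, properly labeled leaves of $T$ are exactly the live prefixes of length $n$. Starting from the unique (trivially live) empty prefix and multiplying the per-level branching bound across the $n$ levels gives
\[
	\psi^s_\mathcal{F}(n) \leq \left(\sum_{i=0}^{r-1} \binom{s}{i}\right)^n,
\]
as required. I do not expect a real obstacle here: once the equivalence $\opR_r(\mathcal{F}) = 0 \iff \mathrm{VC\text{-}dim}(\mathcal{F}) \leq r-1$ is in hand, the rest is bookkeeping plus one application of Sauer-Shelah at each node, with the essential point that one may pass to the subsystem $\mathcal{F}_{\xi|_{[j]}}$ without increasing the VC-dimension. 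One could instead phrase the branching step through the generalized banned sequence machinery of Theorem \ref{gencombin} (cutting the tree into single-level banned $2^s$-ary sequence problems), but the direct argument is shorter and more transparent.
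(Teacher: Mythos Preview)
Your proof is correct and follows essentially the same approach as the paper's: translate $\opR_r(\mathcal{F}) = 0$ into the statement that $\mathcal{F}$ has VC-dimension strictly less than $r$, apply Sauer--Shelah at each node to bound the number of live successors by $\sum_{i=0}^{r-1}\binom{s}{i}$, and multiply across the $n$ levels. Your version is slightly more careful in explicitly passing to the subfamily $\mathcal{F}_{\xi|_{[j]}}$ and invoking monotonicity of VC-dimension, but this is a refinement of the same argument rather than a different one.
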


\begin{proof}
	Call a node \emph{live} if it the initial segment of a leaf that can be properly labeled. At each node of the tree, we consider $s$ elements. Observing that $\opR_r(\mathcal{F}) = 0$ says precisely that the VC dimension of $\mathcal{F}$ is strictly less than $r$, Theorem \ref{VC-SS} tells us that we can find sets which properly label at most $\sum_{i=0}^{r-1} \binom{s}{i}$ of the possible boolean combinations of the $s$ elements. That is, each live node has at most $\sum_{i=0}^{r-1} \binom{s}{i}$ live successors in the next level. Therefore, there are at most $\left( \sum_{i=0}^{r-1} \binom{s}{i} \right)^m$ live nodes at the level of height $m$ (counting from 0). Since leaves in a tree of height $n$ appear at the $n$th level, the result follows.
\end{proof}

The set system of half-spaces in $\mathbb{R}^r$ achieves the bound of Proposition \ref{oprank0} for the \emph{dual} $\op_s$ shatter function. (This is the famous cake-cutting problem.) 

\begin{prop}
	Let $\mathcal{F}$ be the dual set system to the set system of $\mathbb{R}^r$ consisting of half-spaces. Then
	\[
		\psi^s_\mathcal{F}(n) = \left( \sum_{i=0}^r \binom{s}{i} \right)^n.
	\]
	In particular, $\opR_{r+1}(\mathcal{F}) = 0$.
\end{prop}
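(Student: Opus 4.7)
The statement asserts both $\psi^s_\mathcal{F}(n) = (\sum_{i=0}^r \binom{s}{i})^n$ and $\opR_{r+1}(\mathcal{F}) = 0$. My plan is to first derive $\opR_{r+1}(\mathcal{F}) = 0$ from the classical hyperplane arrangement (``cake-cutting'') bound, then use Proposition \ref{oprank0} for the upper bound, and finally construct an explicit tree achieving that bound.

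For the first step, recall that an arrangement of $s$ affine hyperplanes in $\mathbb{R}^r$ partitions $\mathbb{R}^r$ into at most $\sum_{i=0}^r \binom{s}{i}$ open cells, with equality in general position. Since $\mathcal{F}$ is the dual of the set system of half-spaces, the projection of $\mathcal{F}$ onto any $s$ half-spaces $H_0, \ldots, H_{s-1}$ has size equal to the number of distinct sign patterns $(1_{H_0}(y), \ldots, 1_{H_{s-1}}(y))$ realized by points $y \in \mathbb{R}^r$, and so is bounded by the number of cells. For $s = r+1$ this gives $2^{r+1} - 1 < 2^{r+1}$, so $\mathcal{F}$ cannot shatter a set of size $r+1$; i.e., $\mathcal{F}$ has VC-dimension at most $r$. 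Since (as noted in the proof of Proposition \ref{oprank0}) $\opR_{r+1}(\mathcal{F}) = 0$ is equivalent to the VC-dimension of $\mathcal{F}$ being strictly less than $r+1$, we obtain $\opR_{r+1}(\mathcal{F}) = 0$. Applying Proposition \ref{oprank0} with $r$ replaced by $r+1$ then yields $\psi^s_\mathcal{F}(n) \leq (\sum_{i=0}^r \binom{s}{i})^n$.

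For the matching lower bound, I would construct a $2^s$-ary element tree of height $n$ by recursion. Assign to each node $\nu$ a nonempty open polytope $R_\nu \subseteq \mathbb{R}^r$, starting with $R_\emptyset = \mathbb{R}^r$. At $\nu$, choose $s$ affine hyperplanes whose associated half-spaces are in general position within $R_\nu$, and label $\nu$ by the resulting $s$-tuple. These hyperplanes cut $R_\nu$ into exactly $\sum_{i=0}^r \binom{s}{i}$ nonempty open sub-polytopes, one per realizable sign pattern $\epsilon \in 2^s$; let $R_{\nu \wedge \epsilon}$ be the corresponding sub-polytope for realizable $\epsilon$, and label the remaining (unrealizable) children arbitrarily, since no element of $\mathcal{F}$ can properly label any leaf below them. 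For each leaf $\xi$ all of whose sign patterns along its branch are realizable, $R_\xi$ is nonempty, and any $y \in R_\xi$ yields a dual element $A_y = \{H \in \mathcal{F} \mid y \in H\}$ that properly labels $\xi$. Counting realizable branches level by level gives exactly $(\sum_{i=0}^r \binom{s}{i})^n$ properly labeled leaves, matching the upper bound.

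The main technical care goes into the general position claim: within each nonempty open $R_\nu$ one can pick $s$ affine hyperplanes so that their arrangement restricted to $R_\nu$ has exactly $\sum_{i=0}^r \binom{s}{i}$ nonempty full-dimensional cells. This is a standard genericity argument, since for any open $R_\nu$ a generic $s$-tuple of hyperplanes meeting $R_\nu$ is in general position and realizes the full cell count within $R_\nu$. With this in hand, the upper and lower bounds agree, giving the desired equality and completing the proof.
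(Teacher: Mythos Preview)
Your proposal is correct and follows essentially the same approach as the paper: both arguments rest on the classical hyperplane cell-count $\sum_{i=0}^r \binom{s}{i}$ and on recursively subdividing each open cell by a fresh generic arrangement to build the $2^s$-ary tree achieving the bound. The only differences are organizational: the paper proves the cake-cutting formula by an explicit double induction (on $r$ and $s$) rather than citing it, and it derives $\opR_{r+1}(\mathcal{F})=0$ as a corollary of the shatter-function equality, whereas you establish $\opR_{r+1}(\mathcal{F})=0$ first (via the VC bound) and then invoke Proposition~\ref{oprank0} for the upper bound.
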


\begin{proof}
	It suffices to verify that taking $s$ hyperplanes in general position (i.e. so that any $m$ hyperplanes intersect in a $(r-m)$-dimensional subspace) partitions $\mathbb{R}^r$ into $\sum_{i=0}^r \binom{s}{i}$ pieces, each of which contains an open set (in the Euclidean topology). Such a partition corresponds to one level in the $2^s$-ary tree. Each piece may then be partitioned further in the same manner for each successive level of the tree.
	
	We proceed by induction. The $s = 1$ case is obvious, for all $r$. The $r = 1$ case is obvious, for all $s$.
	
	Consider the $s+1$ and $r+1$ case. Removing one of the $s+1$ hyperplanes, we have $\sum_{i=0}^{r+1} \binom{s}{i}$ pieces by induction. Restore the hyperplane that we removed. Viewing that hyperplane as a copy of $\mathbb{R}^r$, it is partitioned into $\sum_{i=0}^r \binom{s}{i}$ pieces by the other hyperplanes, by induction. Each such piece corresponds to a piece in $\mathbb{R}^{r+1}$ which is cut into two pieces by the restored hyperplane. We therefore find that the total number of pieces is
	\[
		\sum_{i=0}^{r+1} \binom{s}{i} + \sum_{i=0}^r \binom{s}{i} = \sum_{i=0}^{r+1} \binom{s+1}{i}.
	\]
	as desired.
\end{proof}

We can further refine our methods further. Fix a base set $X$. We identify any set system $(X, \mathcal{F})$ on $X$ with $\mathcal{F}$.

\begin{prop} \label{opproperties}
	\begin{enumerate}
		\item Let $\mathcal{F}_1 \subseteq \mathcal{F}_2$. Then, for any $s$, $\opR_s(\mathcal{F}_1) \leq \opR_s(\mathcal{F}_2)$.
		\item Let $s_1 < s_2$. Then $\opR_{s_1}(\mathcal{F}) \geq \lfloor \frac{s_2}{s_1} \rfloor \opR_{s_2}(\mathcal{F})$.
	\end{enumerate}
\end{prop}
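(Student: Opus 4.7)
For part (1), a $2^s$-ary element tree $T$ of height $\opR_s(\mathcal{F}_1)$ in which every leaf is properly labeled by some element of $\mathcal{F}_1$ also witnesses $\opR_s(\mathcal{F}_2) \geq \opR_s(\mathcal{F}_1)$, since each labeling set belongs to $\mathcal{F}_2 \supseteq \mathcal{F}_1$. This is a one-line monotonicity argument.

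For part (2), write $q = \lfloor s_2/s_1 \rfloor$ and $k = \opR_{s_2}(\mathcal{F})$. My plan is to work with the alternative formulation of $2^s$-ary element trees: an alternative $2^s$-ary tree of height $n$ is a binary tree of depth $ns$ whose labels are constant inside each block of $s$ consecutive levels. Starting from an alternative $2^{s_2}$-ary tree $T$ of height $k$ in which every binary leaf of length $ks_2$ is properly labeled, I will construct an alternative $2^{s_1}$-ary tree $T'$ of height $qk$ in which every binary leaf of length $qks_1$ is properly labeled; this yields $\opR_{s_1}(\mathcal{F}) \geq qk = \lfloor s_2/s_1 \rfloor\opR_{s_2}(\mathcal{F})$.

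The construction is a padding map. Given $\sigma \in 2^l$ with $l \leq qks_1$, write $l = mqs_1 + c$ with $0 \leq c < qs_1$, and let $\tilde\sigma \in 2^{ms_2 + c}$ be obtained from $\sigma$ by appending $s_2 - qs_1$ zeros after each of the $m$ completed super-blocks of $qs_1$ consecutive bits in $\sigma$. Define the label of $\sigma$ in $T'$ to be the label of $\tilde\sigma$ in $T$. Two verifications remain. For uniformity in $T'$: if $\sigma, \sigma'$ at level $l$ agree on their initial segment of length $s_1\lfloor l/s_1\rfloor$, then that length is at least $mqs_1$, so $\tilde\sigma$ and $\tilde\sigma'$ share their first $ms_2$ entries, which is the prefix length controlling labels in the $s_2$-block uniformity of $T$; hence the labels agree. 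For proper labeling of leaves: given $\tau \in 2^{qks_1}$, its pad $\tilde\tau \in 2^{ks_2}$ is properly labeled by some $A \in \mathcal{F}$, and since the non-padding positions of $\tilde\tau$ reproduce $\tau$ exactly, the same $A$ properly labels $\tau$ in $T'$.

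The main technical obstacle is purely bookkeeping: pinning down the padding correspondence between positions in $T'$ and positions in $T$, and verifying that both the block-uniformity condition and the proper-labeling condition survive the truncation of $s_2 - qs_1$ bits from each super-block. Once we pass to the alternative formulation of element trees, there is no genuine combinatorial difficulty — the construction just records that each level of the $2^{s_2}$-ary tree carries $s_2$ bits of branching information, of which $qs_1$ can be repackaged into $q$ full levels of a $2^{s_1}$-ary tree.
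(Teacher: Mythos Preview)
Your proof is correct and takes essentially the same approach as the paper: both construct the $2^{s_1}$-ary tree from the $2^{s_2}$-ary tree by keeping the first $qs_1$ bits of branching from each $s_2$-block and padding the discarded positions with zeros when mapping back into $T$. The only cosmetic difference is that the paper carries out the construction in the main $2^s$-ary formulation (with tuple-labeled nodes) and then remarks afterward that the alternative binary-tree formulation makes it easier to see, whereas you work directly in that alternative formulation from the start.
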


\begin{proof}
	(1) is trivial. For (2), suppose that we have a $2^{s_2}$-ary element tree $T$ of height $n_2 := \opR_{s_2}(\mathcal{F})$, with labels $x_\nu = (x_{\nu, 0}, \ldots, x_{\nu, s_2 - 1})$ for each $\nu \in (2^{s_2})^{<n_2}$, in which every leaf can be properly labeled. Then we can obtain a $2^{s_1}$-ary element tree $T'$ of height $n_1 := \lfloor \frac{s_2}{s_1} \rfloor n_2$ in which every leaf can be properly labeled. Let $t = \lfloor \frac{s_2}{s_1} \rfloor$. Intuitively, we split each level of the $2^{s_2}$-ary tree into $t$ levels of the $2^{s_1}$-ary tree, with any label $x_\nu = (x_{\nu, 0}, \ldots, x_{\nu, s_2 - 1})$ splitting into $t$ labels 
	\[
		(x_{\nu, 0}, \ldots, x_{\nu, s_1 - 1}), (x_{\nu, s_1}, x_{\nu, 2s_1 - 1}), \ldots, (x_{\nu, (t-1)s_1}, \ldots, x_{\nu, ts_1 - 1}).
	\] 
	More formally, suppose $\xi \in (2^{s_1})^i$, for $i < n_1$. Suppose $i = jt + k$, for $0 \leq k < t$. Then label $\xi$ with
	\[
		x_\xi = (x_{\nu_\xi, ks_1}, \ldots x_{\nu_\xi, (k+1)s_1 - 1}),
	\]
	where $\nu_\xi \in (2^{s_2})^j$ is as follows. Let $\sigma_l = \xi(l) \in 2^{s_1}$. Then let $\tau_m \in 2^{s_2}$ be the concatenation of $\sigma_{mt}, \ldots, \sigma_{(m+1)t - 1}$, appending as many 0s as needed to obtain a sequence of length $s_2$. Then let 
	\[
		\nu_\xi := (\tau_0, \ldots, \tau_{j-1}).
	\]
	Then the labeling of $T'$ by the $x_\xi$ gives a $2^{s_1}$-ary tree of height $n_1$ in which every leaf can be properly labeled (in particular, by one of the labels of the leaves of the $2^{s_2}$-ary tree).
\end{proof}

(2) above is somewhat easier to see using the alternative definition---we simply view the tree as a $2^{s_1}$-ary tree instead of a $2^{s_2}$-ary tree, possibly after removing some levels. Figure \ref{opfig2} is the $2^1$-ary tree obtained from Figure \ref{opfig1} by this process.

Given $\mathcal{F}$, $x_0, \ldots, x_{s-1} \in X$, and $\sigma: [s] \rightarrow 2$, let 
\[
	\mathcal{F}_\sigma := \{Y \in \mathcal{F} \, | \, \text{for all } i < n,\, x_i \in Y \text{ iff } \sigma(i) = 1 \}.
\]
Call each $\mathcal{F}_\sigma$ a child of $\mathcal{F}$. Then, in an $\op_s$-tree with root $(x_0, \ldots, x_{s-1})$, $\mathcal{F}_\sigma$ consists of all sets in $\mathcal{F}$ which properly label a leaf whose path begins with $\sigma$. Observe that if for all $\sigma: [s] \rightarrow 2$, $\opR_s(\mathcal{F}_\sigma) \geq a$, then $\opR_s(\mathcal{F}) \geq a + 1$; we can obtain a $2^s$-ary tree of height $a+1$ by labeling the root with $(x_0, \ldots, x_{s-1})$, and appending $2^s$-ary trees of height $a$ witnessing $\opR_s(\mathcal{F}_\sigma) \geq a$ at the appropriate successor nodes.

\begin{lem} \label{opchild}
	Suppose $\opR_r(\mathcal{F}) = a < \infty$. Then, given any $x_0, \ldots, x_{s-1} \in X$, we have $\opR_r(X_\sigma) \leq a - 1$ for at least $2^s - \sum_{i=0}^{r-1} \binom{s}{i}$ children $\mathcal{F}_\sigma$. More generally, we have $\opR_r(X_\sigma) \leq a - l$ for at least $2^s - \sum_{i=0}^{lr - 1} \binom{s}{i}$ children $\mathcal{F}_\sigma$.
\end{lem}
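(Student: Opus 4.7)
The plan is to prove the more general statement by contradiction, reducing to an application of Sauer-Shelah (Theorem \ref{VC-SS}) to a derived set system on $[s]$; the first statement of the lemma will be the special case $l = 1$.

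First I would set $\Sigma := \{\sigma \in 2^{[s]} : \opR_r(\mathcal{F}_\sigma) \geq a - l + 1\}$, viewed as a set system on $[s]$ by identifying each $\sigma$ with $\{i \in [s] : \sigma(i) = 1\}$. Assume toward contradiction that $|\Sigma| > \sum_{i=0}^{lr-1} \binom{s}{i}$. Then Theorem \ref{VC-SS} gives some $I = \{i_0 < i_1 < \cdots < i_{lr-1}\} \subseteq [s]$ shattered by $\Sigma$: for every $\tau \colon [lr] \to 2$ there exists $\sigma_\tau \in \Sigma$ with $\sigma_\tau(i_j) = \tau(j)$ for all $j < lr$. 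My goal is to use this to build a $2^r$-ary element tree of height $a + 1$ whose every leaf is properly labeled by a set in $\mathcal{F}$, contradicting $\opR_r(\mathcal{F}) = a$.

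To build the tree, I would partition the elements $x_{i_0}, \ldots, x_{i_{lr-1}}$ into $l$ consecutive blocks of size $r$ and label every node at level $m < l$ with the $m$th block, independent of the path through the tree so far (the ``VC-like'' behavior allowed by the definition of a $2^r$-ary element tree). Every leaf of this partial tree of height $l$ then determines some $\tau \colon [lr] \to 2$, and the ``trace child''
\[
	\mathcal{F}^I_\tau := \{Y \in \mathcal{F} : x_{i_j} \in Y \iff \tau(j) = 1 \text{ for all } j < lr\}
\]
contains $\mathcal{F}_{\sigma_\tau}$ by construction, so Proposition \ref{opproperties}(1) yields $\opR_r(\mathcal{F}^I_\tau) \geq a - l + 1$. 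I can therefore append, at each such leaf, a $2^r$-ary element tree of height $a - l + 1$ whose leaves are properly labeled by sets in $\mathcal{F}^I_\tau \subseteq \mathcal{F}$; such a set is automatically consistent with the upper partial path by the very definition of $\mathcal{F}^I_\tau$. The assembled tree has total height $l + (a - l + 1) = a + 1$ with every leaf properly labeled by a set in $\mathcal{F}$, the desired contradiction.

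The step requiring the most care will be the bookkeeping in the tree construction---verifying that the two-stage labeling (first $l$ ``uniform'' levels using blocks of $I$, then subtrees extracted from the $\op_r$-rank hypothesis on each $\mathcal{F}^I_\tau$) produces a legitimate $2^r$-ary element tree in the sense of the formal definition. The boundary situations $a - l + 1 \leq 0$ (where no lower extension is needed and the partial tree of height $l > a$ by itself already contradicts $\opR_r(\mathcal{F}) = a$) and $lr > s$ (where the statement is vacuous, as $\sum_{i=0}^{lr-1}\binom{s}{i} = 2^s$) should be noted but do not pose real difficulty.
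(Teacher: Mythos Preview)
Your proof is correct and follows essentially the same approach as the paper's: both show that if too many children $\mathcal{F}_\sigma$ retain $\op_r$-rank at least $a-l+1$, then on some size-$lr$ subset of $[s]$ every pattern is realized with high rank, whence one stacks $l$ uniform levels atop rank-$(a-l+1)$ witness trees to contradict $\opR_r(\mathcal{F}) = a$. The only difference is packaging---you invoke the classical Sauer--Shelah lemma (Theorem~\ref{VC-SS}) directly on the set system $\Sigma$, while the paper phrases the identical step as the nonemptiness condition of an independent $lr$-fold BBSP and applies Corollary~\ref{Ind-SS}.
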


\begin{proof}
	We obtain an independent $r$-fold banned binary sequence problem $f$ of length $s$ as follows. For each $S \in \binom{[s]}{r}$, let $f(S)$ be those functions $\eta: S \rightarrow 2$ such that $\opR_r(\mathcal{F}_\eta) \leq a - 1$, where 
	\[
		\mathcal{F}_\eta := \{Y \in \mathcal{F} \, | \, \text{for all } i \in S ,\, x_i \in Y \text{ iff } \eta(i) = 1 \}.
	\]
	Each $f(S)$ is nonempty, or else those $X_\eta$ witness that that $\opR_r(\mathcal{F}) \geq a + 1$, a contradiction. Then $\sigma: [s] \rightarrow 2$ is banned by $f$ if there is some $S \in \binom{[s]}{r}$ such that $\opR_r(\mathcal{F}_{\sigma|_S}) \leq a - 1$, whence $\opR_r(\mathcal{F}_\sigma) \leq a - 1$. So sequences banned by $f$ have the corresponding child drop in $\op_r$-rank, of which there are at least $2^s -\sum_{i=0}^{r-1} \binom{s}{i}$ many.
	
	For the more general case, we instead obtain an independent $lr$-fold banned binary sequence problem. For each $S \in \binom{[s]}{lr}$, let $f(S)$ be those $\eta: S \rightarrow 2$ such that $\opR_r(\mathcal{F}_\eta) \leq a - l$. Each $f(S)$ is nonempty, or else those $\mathcal{F}_\eta$ witness that $\opR_r(\mathcal{F}) \geq a + 1$. Then sequences banned by $f$ have the corresponding child drop in $\op_r$-rank by at least $l$, of which there are at least $2^s - \sum_{i=0}^{lr - 1} \binom{s}{i}$ many.
\end{proof}

The boundary between finite and infinite $\op$-ranks serves as an important parameter in obtaining better bounds. It is also of model-theoretic interest, coinciding with other known properties.

\begin{defn}
	The \emph{$\op$-dimension} of a set system $\mathcal{F}$ is 
	\[
		\sup \{r \, | \, \opR_r(\mathcal{F}) = \infty \}.
	\]
\end{defn}

Expressed in model-theoretic terms, the $\op$-dimension of a (type-)definable set $X$ in some model is the supremum of the $\op$-dimension of set systems on $X$ generated finite sets of formulas. In this context, $\op$-dimension coincides with o-minimal dimension in o-minimal theories and dp-rank in distal theories \cite{guingonahill2015oprank}.

We use Lemma \ref{opchild} to obtain better bounds on the $\op_s$ shatter function by using $\op$-dimension.

\begin{defn}
	Let $\psi_{r,b}^s(n)$ be the greatest possible number of properly labeled leaves in a $2^s$-ary tree of height $n$ by any set system $\mathcal{F}$ with $\opR_r(\mathcal{F}) \leq b < \omega$.
\end{defn}

\begin{thm}
	Let $a_0 := \sum_{i=0}^{r-1} \binom{s}{i}$ and $a_1 = 2^s - a_0$. Then
	\[
		\psi_{r,b}^s(n) \leq \sum_{i=0}^b \binom{n}{i} a_0^{n - i} a_1^i.
	\]
\end{thm}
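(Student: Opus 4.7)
The plan is to prove the bound by induction on $b$, with a nested induction on $n$, by establishing the recurrence
\[
\psi_{r,b}^s(n) \leq a_0 \, \psi_{r,b}^s(n-1) + a_1 \, \psi_{r,b-1}^s(n-1).
\]
This recurrence then telescopes into the stated closed form via Pascal's identity. Note that $\psi_{r,b'}^s(n)$ is monotone nondecreasing in $b'$ (a larger rank bound admits a larger family of set systems), which we will use freely below.

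For the base cases, when $b=0$ the hypothesis $\opR_r(\mathcal{F}) \leq 0$ is exactly the hypothesis of Proposition \ref{oprank0}, which gives the bound $a_0^n$, matching $\binom{n}{0} a_0^n a_1^0$. When $n=0$ a $2^s$-ary tree has a single leaf, matching $\binom{0}{0} = 1$.

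For the inductive step, fix an arbitrary $2^s$-ary element tree $T$ of height $n$ and set system $\mathcal{F}$ with $\opR_r(\mathcal{F}) \leq b$. The root of $T$ carries an $s$-tuple $(x_0, \ldots, x_{s-1})$, and the $2^s$ subtrees hanging off the root, indexed by $\sigma : [s] \to \{0,1\}$, each have height $n-1$. A leaf in the $\sigma$-subtree is properly labeled only by a set in the child $\mathcal{F}_\sigma$, so the number of properly labeled leaves of $T$ is at most $\sum_\sigma \psi_{r,\, \opR_r(\mathcal{F}_\sigma)}^s(n-1)$. By Proposition \ref{opproperties}(1), $\opR_r(\mathcal{F}_\sigma) \leq b$ for every $\sigma$, while Lemma \ref{opchild} (with $l=1$) guarantees that at least $a_1 = 2^s - a_0$ of the children satisfy $\opR_r(\mathcal{F}_\sigma) \leq b-1$. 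Using monotonicity to bound the remaining at most $a_0$ children by $\psi_{r,b}^s(n-1)$ and the at least $a_1$ dropped children by $\psi_{r,b-1}^s(n-1)$ gives the desired recurrence.

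It remains to verify that the claimed closed form satisfies this recurrence; this is routine. Substituting the inductive hypothesis, reindexing the second sum via $j = i+1$, and applying Pascal's identity $\binom{n-1}{i} + \binom{n-1}{i-1} = \binom{n}{i}$ for $i \geq 1$ yields
\[
a_0^n + \sum_{i=1}^b \binom{n}{i} a_0^{n-i} a_1^i = \sum_{i=0}^b \binom{n}{i} a_0^{n-i} a_1^i,
\]
completing the induction. The only genuinely subtle ingredient is the splitting from Lemma \ref{opchild}, but that work has already been done; everything here is just bookkeeping. The step I would double-check most carefully is that Lemma \ref{opchild} really applies uniformly to the child $\mathcal{F}_\sigma$ even when $\opR_r(\mathcal{F}_\sigma)$ could be strictly less than $b$ or even $-\infty$, which is handled by the monotonicity remark above.
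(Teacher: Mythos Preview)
Your proposal is correct and follows essentially the same approach as the paper's own proof: both set up the same double induction on $b$ and $n$, derive the recurrence $\psi_{r,b}^s(n) \leq a_0\,\psi_{r,b}^s(n-1) + a_1\,\psi_{r,b-1}^s(n-1)$ from Lemma~\ref{opchild}, and collapse it to the closed form via Pascal's identity, with the $b=0$ base case supplied by Proposition~\ref{oprank0}. Your write-up is in fact slightly more careful than the paper's in making the monotonicity in $b$ explicit and in flagging the edge case where some $\mathcal{F}_\sigma$ may be empty.
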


\begin{proof}
	The case $n = 0$ is trivial for all $b$. We proceed by induction on $b$. The case $b = 0$ is Proposition \ref{oprank0}.
	
	For the case $b + 1$, we observe that, by monotonicity of $\psi_{r,b}^s(n)$ in $b$, we maximize the possible number of properly labeled leaves by having as many children as possible not decrease in $\op_r$-rank. We now proceed by induction on $n$. By Lemma \ref{opchild}, we can have at most $a_0$ such children, and the remaining $a_1$ children must drop in $\op_r$-rank by at least 1. We therefore obtain the recurrence
	\begin{align*}
		\psi_{r,b+1}^s(n) & \leq a_0 \psi_{r,b+1}^s(n-1) + a_1 \psi_{r,b}^s(n-1) \\
		& \leq a_0 \sum_{i=0}^{b+1} \binom{n-1}{i} a_0^{n - i - 1} a_1^i + a_1 \sum_{i=0}^b \binom{n-1}{i} a_0^{n - i - 1} a_1^i \text{\quad by induction}\\
		& \leq \sum_{i=0}^{b+1} \binom{n-1}{i} a_0^{n - i} a_1^i + \sum_{i=0}^b \binom{n-1}{i} a_0^{n - i - 1} a_1^{i+1} \\
		& \leq \binom{n-1}{0} a_0^n + \sum_{i=1}^{b+1} \binom{n-1}{i} a_0^{n - i} a_1^i + sum_{i=1}^{b+1} \binom{n-1}{i-1} a_0^{n-i} a_1^i \\
		& \leq \binom{n}{0} a_0^n + \sum_{i=1}^{b+1} \binom{n}{i} a_0^{n - i} a_1^i \\
		& \leq \sum_{i=0}^{b+1} \binom{n}{i} a_0^{n - i} a_1^i
	\end{align*}
	as desired.
	
\end{proof}

In particular, for a set system with $\op$-dimension $d$, we take $r = d+1$. Then the $\op$ shatter function is bounded by an exponential function with the base $a_0$ determined by $d$. Furthermore, coefficients for lower order terms can be improved when $r \leq \frac{s}{2}$, as then the more general case of Lemma \ref{opchild} dictates that some children must drop in $\op_r$-rank by more than 1. This creates a more complicated recurrence, but the result remains exponential in $a_0$.

\bibliography{Research}{}
\bibliographystyle{plain}

\end{document}